\newcommand{\RR}{\mathbb R}
\newcommand{\ZZ}{\mathbb Z}
\newcommand{\T}{\mathbb T}
\newcommand{\TT}{\mathbb T}
\newcommand{\pat}{\partial_t}
\newcommand{\pax}{\partial_x}
\newcommand{\jeps}{\mathcal{J}_\epsilon*}
\newcommand{\vertiii}[1]{{\left\vert\kern-0.25ex\left\vert\kern-0.25ex\left\vert #1 
    \right\vert\kern-0.25ex\right\vert\kern-0.25ex\right\vert}}
\newcounter{comentcount}
\newcounter{teocount}
\newtheorem{lem}{Lemma}
\newtheorem{prop}{Proposition}
\newtheorem{corol}{Corollary}
\newtheorem{teo}[teocount]{Theorem}  
\newtheorem{defi}{Definition}
\newtheorem{remark}{Remark}
\title{Global solutions for a supercritical drift-diffusion equation}
\author[J. Burczak]{Jan Burczak}
\email{jb@impan.pl}
\address{Institute of Mathematics of the Polish Academy of Sciences, Warsaw, 21 00-956, Poland}
\author[R. Granero-Belinch\'{o}n]{Rafael Granero-Belinch\'{o}n}
\email{rgranero@math.ucdavis.edu}
\address{Department of Mathematics, University of California, Davis, CA 95616, USA}
\begin{document}

\begin{abstract}
We study the global existence of solutions to a one-dimensional drift-diffusion equation with logistic term, generalizing the classical parabolic-elliptic Keller-Segel aggregation equation arising in mathematical biology. In particular, we prove that there exists a global weak solution, if the order of the fractional diffusion $\alpha \in (1-c_1, 2]$, where $c_1>0$ is an explicit constant depending on the physical parameters present in the problem (chemosensitivity and strength of logistic damping). Furthermore, in the range $1-c_2<\alpha\leq 2$ with $0<c_2<c_1$, the solution is globally smooth. Let us emphasize that when $\alpha<1$, the diffusion is in the supercritical regime.
\end{abstract}

\maketitle 

\tableofcontents
\section{Introduction}
The drift-diffusion equation 
\begin{equation}\label{eqDD}
\pat u=-\nu \Lambda^\alpha u+\nabla\cdot(u B(u))+f(u),
\end{equation}
where $B(u)$  is typically a vector of nonlocal operators and $\Lambda =\sqrt{-\Delta}$ (see \eqref{Lfourier} below), appears widely in applications. The parameter $0\leq \alpha\leq 2$ is the order of the diffusion and it measures the strength of the viscous effects.

For instance, the two dimensional incompressible Navier-Stokes equations in its vorticity formulation can be written as
\begin{equation}\label{eqvorticity}
\pat u=\Delta u+\nabla\cdot(u \nabla^\perp \Delta^{-1}u),
\end{equation}
where $\nabla^\perp=(-\partial_{x_2},\partial_{x_1})$. This equation governs the motion of two-dimensional, incompressible, homogeneous fluids in absence of forcing (see \cite{bertozzi-Majda}). Equation \eqref{eqvorticity} can be recovered from equation \eqref{eqDD} by taking  $\nu=1$, $\alpha=2$, $f=0$ and $B=\nabla^\perp \Delta^{-1}$.

Another famous equation akin to \eqref{eqDD} is the parabolic-elliptic simplification of the Keller-Segel system with logistic source
\begin{equation}\label{eqPKS}
\pat u=\Delta u+\nabla\cdot(u \nabla\Delta^{-1}u)+\mu u -ru^2,
\end{equation}
It appears as a model of \emph{chemotaxis}, \emph{i.e.} the proliferation and motion of cells (see the pioneer work of E. Keller \& L. Segel \cite{keller1970initiation} and the  reviews by A. Blanchet \cite{blanchet2011parabolic} and Hillen \& Painter \cite{Hillen3}, whose cell-kinetics model (M8) is the doubly-parabolic version of \eqref{eqPKS}). Here $u\geq0$ is the density of cells. To obtain \eqref{eqPKS} form \eqref{eqDD} one takes $\nu=1$, $\alpha=2,$ $B=\nabla \Delta^{-1}$ and $f=\mu u -ru^2$. Equation \eqref{eqPKS} is also a model of gravitational collapse (see the works by  Biler \cite{Bi1} and Ascasibar, Granero-Belinch\'on \& Moreno \cite{AGM})

Furthermore, in one spatial dimension, the equation
\begin{equation}\label{eqdislocation}
\pat u=-\pax(uHu),
\end{equation}
where $H$ denotes the Hilbert transform (see \eqref{Hfourier} and \eqref{Hkernel} below), has been proposed as a model of the dynamics of a dislocation density $u$ (see \cite{dislocation1} and the work by Biler, Karch \& Monneau \cite{biler2010nonlinear}). Equation \eqref{eqdislocation} appears also in a totally different context, namely as a one-dimensional model of the surface quasi-geostrophic equation (see Castro \& C\'ordoba \cite{CC}). In order to recover \eqref{eqdislocation} from \eqref{eqDD}, we choose $\nu = 0$, $f=0$ and $B=-H$.

Finally, notice that the famous Burgers equation
\begin{equation}\label{eqBurgers}
\pat u=-\Lambda^\alpha u-u\pax u
\end{equation}
can be obtained from \eqref{eqDD} by taking $B(u)=-u/2$.

In all these equations there is a competition between the diffusion term given by
$$
\Lambda^\alpha
$$
and the transport term 
$$
\nabla\cdot (uB(u)).
$$
Consequently, if the order of the diffusion $\alpha$ is large enough, the solution $u$ will remain smooth for every time. On the other hand, when the order of the diffusion is too weak, there are finite-time singularities. 

Burgers equation \eqref{eqBurgers} is the paradigm here. There, if $\alpha>1$ (this is known as the subcritical case), there exists a global, smooth solution, whereas for $\alpha<1$ (this is known as the supercritical case), there is a finite time $T^*$, such that
$$
\limsup_{t\rightarrow T^*} \|\pax u(t)\|_{L^\infty}=\infty.
$$
The remaining case $\alpha=1$ is known as the critical case. Here the diffusive operator is of order one, so it perfectly matches the order of the transport term. Kiselev, Nazarov \& Shterenberg \cite{kiselevburgers}, Dong, Du \& Li \cite{dong2009finite} and Constantin \& Vicol \cite{constantin2013long} proved that in the critical case, there exists a global, smooth solution. Furthermore, Dabkowski, Kiselev, Silvestre \& Vicol proved the global existence of classical solutions for a logarithmically supercritical Burgers equation \cite{DKSV}.

A similar picture appears for the one-dimensional case of \eqref{eqDD} with $f=0$, which is a fractional-diffusion version of Keller-Segel system \eqref{eqPKS} with $\mu=r=0$. This is because the Burgers equation \eqref{eqBurgers} is a primitive equation to this one. For more details on available results, see Section \ref{sec:2}.  

\subsection{Considered problem}
In this paper we are going to study the drift-diffusion equation \eqref{eqDD} on the one-dimensional torus  $\TT$, i.e.
\begin{equation}\label{eq1}
\pat u=-\Lambda^\alpha u+\chi\pax(u B(u))+f(u),\quad x\in\TT
\end{equation}
 with 
 \begin{equation}\label{eqB}
B(u)=\Lambda^{\beta-1}H(1 + \Lambda^{\beta})^{-1}u,
\end{equation}
and $f$ of logistic form
\begin{equation}\label{eqf}
f(u)=ru(1-u), \quad 0<r.
\end{equation}
Our primary motivation here is the (parabolic-elliptic) Keller-Segel model, which corresponds to $\beta=2$. 

\subsection{Outline of results}
Our results concern the global in time existence of solutions to \eqref{eq1} with $\alpha \le 2$ and $\chi>r$. Primarily, we will show that for positive \[\alpha > 1-r/\chi,\]

equation \eqref{eq1} possesses a global smooth solution, see Theorem \ref{th:globalstrong}. To the best of our knowledge, this is the first result on global smooth solutions in the supercritical regime of an equation modelling chemotaxis. This result may be somehow surprising, because the global existence is obtained due to an inviscid regularization $-r u^2$, present in the logistic term that may be arbitrarily small, i.e. $0<r\ll1$. Let us recall that for $r=0$ the critical case $\alpha=1$ is globally well-posed, see the work \cite{BG3} by the authors. 

Along with the smoothness result, we also study the global existence of weak solutions and show that there exists a global weak solution for 
$$
\alpha>
\begin{cases}
1-  \frac{r}{\chi-r} &\text{ for } \chi/2 > r \cr
0 &\text{ for } \chi/2\leq r
\end{cases}
$$ 
see Theorem \ref{th:globalweak}. At the cost of reducing the integrability and smoothness of our weak solutions, we can relax the condition on $\alpha$ and obtain
$$
\alpha>
\begin{cases}
2\frac{1-s-s^2}{2+s} &\text{ for } \chi/2 > r \text{ and }0<s<\frac{r}{\chi-r}\cr
0 &\text{ for } \chi/2\leq r.
\end{cases}
$$ 

This result is more difficult to obtain than its analogue for the Burgers equation \eqref{eqBurgers}, due to lack of the $L^2$ energy balance. These main theorems cover the case of an arbitrary, finite time of existence $T$ (with no smallness assumptions on data). They are supplemented by Proposition \ref{th:largetime} on boundedness of $u$ for $T=\infty$. It holds for supercritical or critical diffusions, where in the latter case we need an explicit smallness condition on data or chemosensitivity parameter $\chi$.

Let us remark here that our result may be easily adapted for another class of operators $B$. For instance, one may consider the case
\begin{equation}\label{eqB2}
B(u)=\Lambda^{\beta-1}H\Lambda^{-\beta}(u-\langle u \rangle),
\end{equation}
where 
$$
\langle u \rangle=\frac{1}{2\pi}\int_\TT u(x)dx.
$$
Moreover, the same ideas work for the case of the spatial domain being $\RR$.

\subsection{Plan of the paper} 
In Section \ref{sec:2} we review some results on drift-diffusion equation that are related to our problem. Next, in Section \ref{sec:3}, we state our main results. In Section \ref{sec:4} we provide some preliminary results, including the local existence and analyticity and certain estimates for lower order norms. Section \ref{sec:5} is devoted to the proof of Theorem \ref{th:globalweak} on  global in time existence of weak solution, whereas in Section \ref{sec:6} we prove Theorem \ref{th:globalstrong} on  global in time existence of smooth solution.
In Section \ref{sec:7} we prove Proposition \ref{th:largetime} on boundedness for $T=\infty$. Finally, in the appendix we provide some estimates for the fractional laplacian and inequalities for the standard $L\log(L)$ entropy and the $\dot{W}^{\gamma,p}$ seminorm.

\subsection{Notation}
We write $H$ for the Hilbert transform and $\Lambda =\sqrt{-\Delta}$, \emph{i.e.}
\begin{equation}\label{Hfourier}
\widehat{Hu}(\xi)=-i\text{sgn}(\xi)\hat{u}(\xi), 
\end{equation}
\begin{equation}\label{Lfourier}
\widehat{\Lambda^s u}(\xi)=|\xi|^s\hat{u}(\xi),
\end{equation}
where $\hat{\cdot}$ denotes the usual Fourier transform. Notice that in one dimension $\Lambda=\pax H$ and $\widehat{Hu}(0)=0$. These operators have the following kernel representation
\begin{equation}\label{Hkernel}
Hu(x)=\frac{1}{\pi}\text{P.V.}\int_{\TT}\frac{u(y)}{\tan\left(\frac{x-y}{2}\right)}dy,
\end{equation}
\begin{equation}\label{Lkernel}
\Lambda u(x)=\frac{1}{2\pi}\text{P.V.}\int_{\TT}\frac{u(x)-u(y)}{\sin^2\left(\frac{x-y}{2}\right)}dy.
\end{equation}
We also have
\begin{align}\label{Lkernelalpha}
\Lambda^\alpha u(x)&=c_\alpha\bigg{(}\sum_{k\in \ZZ, k\neq 0}\int_{\TT}\frac{u(x)-u(x-\eta)d\eta}{|\eta+2k\pi|^{1+\alpha}}\nonumber\\
&\quad +\text{P.V.}\int_{\TT}\frac{u(x)-u(x-\eta)d\eta}{|\eta|^{1+\alpha}}\bigg{)},
\end{align}
with
$$
c_\alpha=\frac{\Gamma(1+\alpha)\cos((1-\alpha)\pi/2)}{\pi}\geq0.
$$
We denote by $\mathcal{J}_\epsilon$ the periodic heat kernel at time $t=\epsilon$. 

We write $T_{max}$ for the maximum time of existence of a given solution $u$.

Finally, to simplify the notation, for the case $B(u)$ given by \eqref{eqB} we define
\begin{equation}\label{eqvB}
v=(1+\Lambda^\beta)^{-1}u,
\end{equation}
while in the case \eqref{eqB2}
\begin{equation}\label{eqvB2}
v=\Lambda^{-\beta}(u-\langle u\rangle).
\end{equation}
Notice that, in both cases, $v$ can be defined using Fourier series. Furthermore, we have
$$
B(u)=\Lambda^{\beta-1}Hv.
$$

\subsection{Functional spaces}
The fractional $L^p$-based Sobolev spaces, $W^{s,p}(\TT)$, are 
$$
W^{s,p}=\left\{f\in L^p(\TT), \pax^{\lfloor s\rfloor} f\in L^p(\TT), \frac{|\pax^{\lfloor s\rfloor}f(x)-\pax^{\lfloor s\rfloor}f(y)|}{|x-y|^{\frac{1}{p}+(s-\lfloor s\rfloor)}}\in L^p(\TT\times\TT)\right\},
$$
with norm
$$
\|f\|_{W^{s,p}}^p=\|f\|_{L^p}^p+\|f\|_{\dot{W}^{s,p}}^p, 
$$
$$
\|f\|_{\dot{W}^{s,p}}^p=\|\pax^{\lfloor s\rfloor} f\|^p_{L^p}+\int_{\TT}\int_{\TT}\frac{|\pax^{\lfloor s\rfloor}f(x)-\pax^{\lfloor s\rfloor}f(y)|^p}{|x-y|^{1+(s-\lfloor s \rfloor)p}}dxdy.
$$
In the case $p=2$, we write $H^s(\TT)=W^{s,2}(\TT)$ for the standard non-homogeneous Sobolev spaces with norm
$$
\|f\|_{H^s}^2=\|f\|_{L^2}^2+\|f\|_{\dot{H}^s}^2, \quad \|f\|_{\dot{H}^s}=\|\Lambda^s f\|_{L^2}.
$$

\section{Prior results for one-dimensional drift-diffusion equations}\label{sec:2}
In this section we are going to review some results concerning certain one-dimensional models. 

First, let us recall the following nonlocal version of the Kuramoto-Shivashinsky equation:
\begin{equation} \label{eqKuram}
\pat u +u \pax u + \epsilon \Lambda^{1+\delta}u=\Lambda^\gamma u, \quad 0\leq\gamma\leq1+\delta, \quad \epsilon,\delta>0.
\end{equation}
It provides a simple model for the stabilization of a Hadamard instability, with growth rate proportional to the absolute value of the wavenumber,
by higher-order viscous diffusion. The Kelvin-Helmholtz instability for the Euler equations is an instability of this type. Equation \eqref{eqKuram} was studied by Granero-Belinch\'on \& Hunter \cite{GH}. These authors showed the chaotic character of the dynamics and studied analytically some properties of the attractor and the solutions. 

Next, let us recall that extensive studies were performed for  the so called $\delta-$model
\begin{equation} \label{eqdelta}
\pat u +(1-\delta)H u \pax u +\delta \pax (uHu) + \nu \Lambda^{\alpha}u=0, \quad 0\leq\delta\leq1.
\end{equation}
Let us for a moment focus on the the inviscid case $\nu=0$.
This equation appears as a one-dimensional model of the inviscid surface quasi-geostrophic (SQG) equation. From the viewpoint of applications, the SQG equation models the dynamics of hot and cold air in the atmosphere; from a purely mathematical perspective, it is widely studied as a two dimensional model of the three dimensional Euler equations in its vorticity formulation. Let us note that there are several singularity formation results available. The singularity in the cases $0<\delta<1/3$ and $\delta=1$ was proved by Morlet \cite{Morlet}. In the range $0<\delta\leq1$ the finite time singularity was proved by Chae, C\'ordoba, C\'ordoba \& Fontelos \cite{CCCF}. Finally, the singularity in the case $\delta=0$ was proved by C\'ordoba, C\'ordoba and Fontelos \cite{gazolaz2005formation} and by Silvestre \& Vicol \cite{silvestre2014transport}. The case $\delta=-1$ and $\alpha>1$ is similar to the nonlocal Kuramoto-Sivashinsky equation \eqref{eqKuram} and it has been studied by Li \& Rodrigo in \cite{LiRodrigo}. On the other hand, Bae \& Granero-Belinch\'on \cite{bae2015global} proved the global existence of weak solution in the case $1>\delta\geq 0.5$.

The two extreme cases $\delta=0$ and $\delta=1$, both in the inviscid and viscous case, deserve special further attention. 

The case $\delta=0$ reads
\begin{equation} \label{eqdelta0}
\pat u +H u \pax u + \nu\Lambda^{\alpha}u=0.
\end{equation}
This is a Burgers-like equation with a nonlocal velocity. It is contained in the scale of equations
\begin{equation} \label{eqdelta0b}
\pat u +\Lambda^sH u \pax u + \nu\Lambda^{\alpha}u=0,\quad s\in(-1,1).
\end{equation}
The family of equations \eqref{eqdelta0b} was proposed as a model of the two-dimensional $\alpha-$patches by Dong \& Li \cite{dong2014one}. 

As we have outlined, in the inviscid case, equation \eqref{eqdelta0} develops finite time singularities. On the other hand, one can prove that if $\alpha>1$ there exist global classical solutions. The global existence for arbitrary initial data in the critical case $\alpha=1$ was proved by Dong \cite{HDong}, see also  Kiselev \cite{Kiselev}. The singularity formation in the viscous case has been proved in the  range $0\leq \alpha< 0.25$ by Li \& Rodrigo \cite{LiRodrigo2}. The range $0.25\leq \alpha<1$ remains a major open problem. In this regard, it is worth mentioning that Silvestre \& Vicol \cite{silvestre2014transport} conjecture that solutions of \eqref{eqdelta0} with $0.25\leq \alpha<1$ which arise as limits from vanishing viscosity approximations are bounded in the H\"{o}lder class $1/2$. 

The case $\delta=1$ of \eqref{eqdelta}, or, equivalently, equation \eqref{eqdislocation} in the inviscid case, has been studied by many authors. For instance, global classical solutions for initial data satisfying a strict positivity condition and finite time singularities otherwise are proved by Castro and C\'ordoba in \cite{CC} (see also \cite{CC2}). The existence of global weak solutions is proved by Carrillo, Ferreira \& Precioso \cite{Carrillo} or Bae \& Granero-Belinch\'on \cite{bae2015global}. The authors studied in \cite{BG} the asymptotic behavior. In particular, we obtained the following inequality
$$
\frac{d}{dt}\left(\mathcal{F}(u)-\frac{1}{2\min_{x}u_0(x)}\mathcal{I}(u)\right)\geq0,
$$
where the entropy and the Fisher information are defined as
$$
\mathcal{F}(u)=\int_\TT u\log(u)-udx,
$$
$$
\mathcal{I}(u)=\int_\TT \left|\Lambda^{0.5} u\right|^2dx,
$$
respectively, and $u_0$ such that $\langle u_0\rangle=1$ denotes the initial data for equation \eqref{eqdislocation}. As a consequence, we have proved the following \emph{nonlocal logarithmic Sobolev} inequality
\begin{equation}\label{logsobnonlocal}
\int_\TT f(x)\log(f(x))dx\leq 2\pi +\frac{1}{2\min_{x}f(x)}\int_\TT \left|\Lambda^{0.5} f(x)\right|^2dx.
\end{equation}
Inequality \eqref{logsobnonlocal} remains valid for every function $f\in L^1\cap \dot{H}^{0.5}$, $f>0$. Consequently, for strictly positive and bounded below data we have obtained exponential decay of $\mathcal{F}(u(t))$.

Equation \eqref{eqdislocation} is a particular case of the aggregation equation
\begin{equation}\label{eqdislocationagg}
\pat u  =  \pax(-\mu(u)Hu+\chi u\pax (\pax^2)^{-1} (u-\langle u \rangle) +ru(1-u),
\end{equation}
studied by Granero-Belinch\'on \& Orive-Illera \cite{GO} and the authors in \cite{BG}. In turn, equation \eqref{eqdislocationagg} is akin to the classical (parabolic-elliptic) Keller-Segel equation in one spatial dimension:
\begin{equation}\label{eqKS1D}
\pat u  = -\Lambda^\alpha u +\pax(u\pax (\pax^2)^{-1} (u-\langle u \rangle)).
\end{equation}
Since Keller-Segel systems are one of our main interests, let us concentrate on it for a moment. Equation \eqref{eqKS1D} is the one dimensional model of the behavior of microorganisms (with density $u$), where their motility follows $\Lambda^\alpha$, instead the (more classical) $-\pax^2$. This choice for the diffusive operator is supported by a strong evidence, both theoretical and experimental, that feeding strategies based on L\'evy process are used by organisms in certain situations. For instance, the interested reader can refer to Lewandowsky, White \& Schuster \cite{Lew_nencki} for amoebas, Klafter, Lewandowsky \& White \cite{Klaf90} and Bartumeus, Peters, Pueyo, Marras{\'e} \& Catalan \cite{Bart03} for microzooplancton, Shlesinger \& Klafter \cite{Shl86} for flying ants, Cole \cite{Cole} for fruit flies, Atkinson, Rhodes, MacDonald \& Anderson \cite{Atk} for jackals and Raichlen, Wood, Gordon, Mabulla, Marlowe, \& Pontzer for the Hadza tribe \cite{hadza}. 

Equation \eqref{eqKS1D} was first studied by Escudero \cite{escudero2006fractional}. He proved the global existence of solution in the case $1<\alpha\leq 2$. This result was later improved by Bournaveas \& Calvez \cite{bournaveas2010one}, where the authors proved finite time singularities for the case $0<\alpha<1$ and the existence of $K>0$ such that for the case $\alpha=1$ and the initial data satisfying the smallness restriction
$$
\|u_0\|_{L^1}\leq K,
$$
there exists a global smooth solution. Ascasibar, Granero-Belinch\'on \& Moreno \cite{AGM} obtained the global existence in the case $\alpha>1$ and $\alpha=1$ under the \emph{relaxed smallness condition} restriction
$$
\|u_0\|_{L^1}\leq \frac{1}{2\pi}.
$$
Under this restriction, the solution $u(t)$ verifies
$$
\|u(t)-\langle u_0 \rangle\|_{L^\infty}\rightarrow 0.
$$
All these results where improved by the authors in \cite{BG3}, where we showed the global existence and smoothness in the case $\alpha=1$  and arbitrarily large initial data. Furthermore, we also showed that if the initial data $u_0$ is such that
$$
\|u_0\|_{L^1}< 2\pi,
$$
then the solution $u(t)$ verifies the asymptotic behaviour
$$
\|u(t)-\langle u_0 \rangle\|_{L^2}\leq \Sigma e^{-\sigma t},
$$
for certain $\Sigma,\sigma>0$.

 Let us mention also here the works by Biler \& Karch \cite{BilKar10}, Biler \& Wu \cite{BilerWu} and  Wu \& Zheng \cite{WuZheng}, where nonlocal Keller-Segel-type models with very similar in spirit operators $B(u)$ are considered.

Finally, let us mention in our review of one-dimensional models the equation
\begin{equation}\label{modelodiegoderiv}
\partial_t u = -\Lambda u + \pax\left(\frac{u^2Hu}{1+u^2}\right).
\end{equation}
It arises as a model of the interface in two-phase, porous flow. Equation \eqref{modelodiegoderiv} was considered in \cite{c-g-o08} by C\'ordoba, Gancedo \& Orive-Illera, where the authors studied the local existence and qualitative behavior of the solutions. Granero-Belinch\'on, Navarro \& Ortega \cite{GNO} proved the global existence of weak solutions.

\section{Statement of results}\label{sec:3}
We start with our definition of weak solution

\begin{defi}\label{def:weaksol}$u(x,t)\in L^2(0,T;L^2)$ is a global weak solution of \eqref{eq1}, \eqref{eqB} (or of \eqref{eq1}, \eqref{eqB2}) with logistic forcing \eqref{eqf} if for all $T>0$ and $\phi\in \mathcal{D}([-1,T)\times \TT)$ we have
$$
\int_0^T\int_\TT u(-\pat \phi+\Lambda^\alpha\phi)+uB(u)\pax\phi-f(u)\phi dxds = \int_\TT u_0\phi(0)dx.
$$
\end{defi}
Let us observe that the bigger the ratio $\chi / r$ is in \eqref{eq1}, the stronger is the destabilization coming from the term $\chi\pax(u B(u))$, compared with the logistic damping $f(u) = ru(1-u)$. Nevertheless, we will be able to show that for \emph{any} ratio $\chi / r$, there is certain region in the supercritical regime, where the stabilizing effect prevails. 
 
Our first result is the global existence of weak solutions with certain higher regularity. Define
$$
s=\min \left( \frac{r}{\chi-r}, 1 \right)
$$
and $s^-$ as any number strictly smaller than $s$.

\begin{teo}\label{th:globalweak}
Take $\alpha,\chi,\beta,r>0$. Let $u_0\geq0$, $u_0\in L^{2}$ be the initial data. \\
(i) Case $r < \chi/2$. \\
Assume that 
\begin{equation}\label{eqhard}
\alpha>1-s.
\end{equation}
Then, there exists a global in time weak solution
$$
u\in L^{2+s^-} (0,T; L^{2+s^-})
$$ for \eqref{eq1},\eqref{eqB} (or \eqref{eq1}, \eqref{eqB2}) with \eqref{eqf} (in the sense of Definition \ref{def:weaksol}). This solution verifies for any $T <\infty$
$$
u\in L^\infty(0,T;L^{1+s})\cap L^2(0,T;L^{2}\cap W^{\alpha/2-\delta_1,1})\cap L^{2+2s} (0,T;W^{\alpha/(2+2s)-\delta,1+s}),
$$
where $0<\delta<\frac{\alpha}{2+2s}$ and  $0<\delta_1<\frac{\alpha}{2}$ are arbitrarily small numbers.\\
At the cost of decreasing $s$ to $s^-$ above, one can relax condition \eqref{eqhard} and assume instead
\begin{equation}\label{eqsoft}
\alpha > 2 \frac{1-s-s^2}{s+2}.
\end{equation} 
(ii) Case $r \geq \chi/2$. \\
In addition to the previous case, we have here for any $T <\infty$
$$
u\in L^2(0,T;H^{\alpha/2}).
$$
\end{teo}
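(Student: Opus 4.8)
My plan is to construct the weak solution as a limit of smooth approximate solutions obtained by a vanishing-regularization scheme (mollifying the drift and/or adding a small $\Lambda^2$-viscosity, say $-\epsilon\Lambda^2 u$), for which local existence and analyticity is guaranteed by the preliminary results of Section~\ref{sec:4}. The key point is to derive, uniformly in the regularization parameter $\epsilon$, the a priori bounds encoded in the statement, and then pass to the limit using an Aubin--Lions type compactness argument. Since $u_0\ge 0$ and the logistic nonlinearity preserves positivity (a supersolution/comparison argument for the drift-diffusion operator gives $u\ge 0$), all the estimates below may be taken with $u\ge 0$, which is what makes the $L^p$ energy method work.

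The heart of the matter is the family of $L^p$ estimates. Testing \eqref{eq1} with $p u^{p-1}$ (for $p=1+s$, and then $p=2$, $p=2+2s$) yields
$$
\frac{1}{p}\frac{d}{dt}\|u\|_{L^p}^p + \int_\TT u^{p-1}\Lambda^\alpha u\,dx
= -\chi\,(p-1)\int_\TT u^{p-1}(\pax u)\,B(u)\,dx + r\int_\TT u^p(1-u)\,dx.
$$
For the diffusion term I would invoke the fractional Córdoba--Córdoba / Constantin--Vicol positivity inequality from the appendix to bound $\int u^{p-1}\Lambda^\alpha u \ge c\,\|\Lambda^{\alpha/2}(u^{p/2})\|_{L^2}^2$, hence control a $\dot W^{\alpha/p,1}$-type seminorm of $u^{p/2}$ after using $\|\Lambda^{\alpha/2}(u^{p/2})\|_{L^2}^2\gtrsim \|u^{p/2}\|_{\dot W^{\alpha/2-\delta,1}}$-style interpolation (this is where the $\delta$, $\delta_1$ losses come from). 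The drift term is integrated by parts, $\int u^{p-1}(\pax u)B(u) = \frac1p\int u^p\,\pax B(u) = -\frac1p\int u^p\,\Lambda^\beta H v\,\pax$-type rewriting; crucially, since $v=(1+\Lambda^\beta)^{-1}u$, one has $\Lambda^\beta v = u - v$, so $\pax B(u) = \pax\Lambda^{\beta-1}Hv = \Lambda^\beta v = u-v$ up to the Riesz identity $\Lambda = \pax H$. Thus the bad term becomes $\frac{\chi(p-1)}{p}\int u^p(u-v)\,dx \le \frac{\chi(p-1)}{p}\int u^{p+1}\,dx$ because $v\ge 0$ (it is $(1+\Lambda^\beta)^{-1}$ of a nonnegative function; or, in the \eqref{eqB2} case, one argues with $\langle u\rangle$). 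Combining with the damping $-r\int u^{p+1}$ from the logistic term, the net coefficient in front of $\int u^{p+1}$ is $\frac{\chi(p-1)}{p}-r$, which is $\le 0$ precisely when $p\le \frac{r}{\chi-r}+1 = 1+s$ (when $\chi>r$; for $\chi/2\le r$ one gets room up to $p=2$, explaining case (ii)). This is the mechanism by which the arbitrarily small inviscid damping controls the supercritical drift, and it pins down the exponents $1+s$, $2+2s$ appearing in the theorem.

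With these differential inequalities in hand, a Grönwall argument (the surviving lower-order terms $r\int u^p$ are linear in $\|u\|_{L^p}^p$) gives the claimed $L^\infty_t L^{1+s}$, $L^2_t L^2$, $L^{2+2s}_t L^{2+2s}$ bounds on finite time intervals, and time-integrating the diffusion contributions produces the $L^2_t\dot W^{\alpha/2-\delta_1,1}$ and $L^{2+2s}_t\dot W^{\alpha/(2+2s)-\delta,1+s}$ regularity (via the appendix inequalities relating $\|\Lambda^{\alpha/2}g\|_{L^2}$ to $\dot W^{\gamma,p}$-seminorms of $g=u^{p/2}$, then un-powering). For the soft condition \eqref{eqsoft} one instead runs the argument with $p=1+s^-<1+s$, which leaves a small \emph{positive} leftover of $\int u^{p+1}$; this is absorbed by interpolating $\|u\|_{L^{p+1}}^{p+1}$ between $L^{p}$ (controlled) and the gained fractional-Sobolev norm, and the inequality \eqref{eqsoft} is exactly the condition under which this interpolation closes with a small enough exponent.

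The last ingredient is compactness: from the equation, $\pat u$ is bounded in $L^{q}_t H^{-M}$ for suitable $q,M$ using the above spatial bounds on $\Lambda^\alpha u$, $\pax(uB(u))$ and $f(u)$; together with the uniform $L^{2+2s}_t W^{\gamma,1+s}$ bound and Aubin--Lions, a subsequence converges strongly in, e.g., $L^{2}_t L^2_{loc}$, which suffices to pass to the limit in the weak formulation of Definition~\ref{def:weaksol}, including the quadratic terms $uB(u)$ and $u^2$ (here one uses that $B$ is a bounded operator on the relevant $L^p$ when $\beta\ge 1$, mapping strong convergence of $u$ to strong convergence of $B(u)$). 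The main obstacle I anticipate is \emph{not} the limit passage but the bookkeeping of the fractional-Sobolev losses: making sure that the interpolation exponents in the diffusion lower bound and in the absorption step for \eqref{eqsoft} are mutually compatible and yield exactly the stated ranges of $\alpha$, rather than something weaker. A secondary technical point is justifying the integration by parts $\int u^{p-1}(\pax u)B(u)\,dx = \frac1p\int u^p \pax B(u)\,dx$ at the level of the approximate (smooth) solutions and checking it survives the mollification, but this is routine once local smoothness from Section~\ref{sec:4} is invoked.
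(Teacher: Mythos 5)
Your overall architecture (regularize, derive uniform $L^p$ bounds from the interplay of $\Lambda^\beta v\le u$ with the logistic damping, then Aubin--Lions) is the same as the paper's, and you have correctly isolated the central mechanism: testing with $u^{p-1}$ produces the coefficient $\frac{\chi(p-1)}{p}-r$ in front of $\int u^{p+1}$, which is nonpositive exactly for $p\le 1+s$. However, there are genuine gaps. First, the conditions \eqref{eqhard} and \eqref{eqsoft} on $\alpha$ do not arise where you place them. Your explanation of \eqref{eqsoft} --- running the estimate with $p=1+s^-$ and absorbing a ``small positive leftover'' of $\int u^{p+1}$ --- rests on a sign error: since $\frac{\chi(p-1)}{p}-r$ is increasing in $p$ and vanishes at $p=1+s$, taking $p<1+s$ gives \emph{more} damping, not a leftover to absorb, so no condition on $\alpha$ is generated at this stage. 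In the paper both conditions come from the \emph{compactness step}: the a priori bound lives in $L^{2+2s}(0,T;W^{\alpha/(2+2s)-\delta,1+s})$, and \eqref{eqhard} is precisely the condition for the compact Sobolev embedding of $W^{\alpha/(2+2s)-\delta,1+s}$ into $L^2$ (needed to pass to the limit in $u_\epsilon B(u_\epsilon)$ and $u_\epsilon^2$ via strong $L^2_tL^2_x$ convergence), while \eqref{eqsoft} is the weaker condition for compact embedding into $L^{\frac{s+2}{s+1}}$ only; the quadratic term is then handled by pairing the strongly convergent $u_\epsilon-u$ in $L^{\frac{s+2}{s+1}}$ against $u_\epsilon+u$, bounded in $L^{s+2}_{t,x}$ --- and it is the need for this last bound (whose constant degenerates as $s\uparrow r/(\chi-r)$, cf.\ part (4) of Lemma \ref{Lemmanorms}) that forces the decrease from $s$ to $s^-$. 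Since you explicitly defer ``the bookkeeping of the fractional-Sobolev losses,'' the step that actually produces the theorem's hypotheses is missing from your argument.

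Second, in case (i) the $L^2(0,T;L^2)$ bound cannot be obtained by your $p=2$ energy estimate: for $r<\chi/2$ one has $2>1+s$, so the coefficient $\frac{\chi}{2}-r$ is positive and that estimate does not close. The paper gets this bound from the $L^1$ balance: integrating \eqref{eq1} in space gives $\frac{d}{dt}\|u\|_{L^1}+r\|u\|_{L^2}^2=r\|u\|_{L^1}$, so the space--time $L^2$ norm is controlled by the mass alone. Similarly, the $L^2(0,T;W^{\alpha/2-\delta_1,1})$ regularity comes from testing with $\log u$ (the entropy) combined with Lemma \ref{lemaentropy}, not from ``un-powering'' $\|\Lambda^{\alpha/2}(u^{p/2})\|_{L^2}$; for the $W^{\gamma,1+s}$ regularity the paper uses Lemma \ref{lemaentropy2}, which bounds a fractional Sobolev seminorm of $u$ itself (not of $u^{p/2}$) by $\|u\|_{L^{1+s}}^{1+s}\int u^s\Lambda^\alpha u\,dx$. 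Your Stroock--Varopoulos route for the dissipation could in principle be made to work, but as written the claimed chain of inequalities does not land in the spaces stated in the theorem.
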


In other words, for arbitrarily large ratio $\chi / r$ we have global existence of weak solutions in the supercritical regime $\alpha>1-s$. Moreover, once the logistic term is strong enough (case $r>\chi/2$) every diffusion $\alpha>0$ is enough to provide us with global solutions. This result is particularly interesting for the case $0<r\ll1$. 

\begin{remark}
Notice that in the case $r=0$ and every $\alpha>0$, global existence of (at least one) weak solution in the measure space $\dot{H}^{-1}$ can be obtained (see \cite{BG3}).
\end{remark}

Our second result establishes the global existence of smooth solution for a range of diffusions $\max(1-r/\chi, 0)<\alpha\leq 2$ and sufficiently smooth initial data. In particular, notice that for any $r>0$ there are diffusions with order less than one (supercritical range) such that the solution is global and smooth. 

\begin{teo}\label{th:globalstrong}
Take $r,\chi, \alpha, \beta>0$ and $u_0\geq0$, $u_0\in H^{3}$. Then,
if 
$$
1-\frac{r}{\chi}<\alpha\leq 2,
$$
there exists the unique global in time solution for \eqref{eq1},\eqref{eqB} (or for \eqref{eq1}, \eqref{eqB2}) with \eqref{eqf}, verifying for any $T < \infty$
$$
u\in C([0,T],H^3).
$$
Furthermore, $u(t)$  is real analytic for any $t \in (0, T]$ and verifies
$$
\sup_{0\leq t <\infty}\|u(t)\|_{L^1}\leq \max\{2\pi,\|u_0\|_{L^1}\}.
$$
\end{teo}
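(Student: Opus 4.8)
The plan is the standard one for such equations: use the local theory of Section~\ref{sec:4} to reduce global existence to an a priori bound that rules out blow-up on any finite interval, and produce that bound by exploiting the precise algebraic structure of the drift together with the logistic damping.

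\textbf{Reduction and low-order estimates.} Section~\ref{sec:4} provides a maximal $H^3$ solution $u\in C([0,T_{max}),H^3)$, analytic for $t>0$, with a continuation criterion that can be closed once $\|u\|_{L^\infty}$ is bounded on $[0,T]$ (via the $H^3$ energy estimate below). So it suffices to bound $\|u(t)\|_{L^\infty}$ on an arbitrary $[0,T]$. First, since $u_0\ge0$, $f(0)=0$, and at a spatial minimum $\underline x$ with $u(\underline x)=0$ one has $\partial_x u(\underline x)=0$, so that $\partial_x(uB(u))(\underline x)=u(\underline x)\,\partial_x B(u)(\underline x)=0$, a minimum principle gives $u(t)\ge0$; hence also $v=(1+\Lambda^\beta)^{-1}u\ge0$ since $(1+\Lambda^\beta)^{-1}$ has a nonnegative kernel of unit mass. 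Integrating \eqref{eq1} over $\TT$ annihilates the first two terms and leaves $\frac{d}{dt}\|u\|_{L^1}=r\|u\|_{L^1}-r\|u\|_{L^2}^2\le r\|u\|_{L^1}-\frac{r}{2\pi}\|u\|_{L^1}^2$ by Cauchy--Schwarz, so the logistic comparison principle gives $\sup_t\|u(t)\|_{L^1}\le\max\{2\pi,\|u_0\|_{L^1}\}$, which is the last assertion of the theorem.

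\textbf{$L^p$ bounds and the passage to $L^\infty$.} The crucial identity is $\partial_x B(u)=u-v$ (cf.\ \eqref{eqvB}): since $B(u)=\Lambda^{\beta-1}Hv$ and $\partial_x H=\Lambda$, one gets $\partial_x B(u)=\Lambda^\beta v=(1-(1+\Lambda^\beta)^{-1})u=u-v$. Testing \eqref{eq1} against $u^{p-1}$, integrating the drift by parts, using the C\'ordoba--C\'ordoba positivity inequality for the diffusion, and discarding the nonnegative term $\int u^p v$ yields
\[
\frac1p\frac{d}{dt}\|u\|_{L^p}^p\le -c_{p,\alpha}\|u^{p/2}\|_{\dot{H}^{\alpha/2}}^2+\Big(\tfrac{\chi(p-1)}{p}-r\Big)\|u\|_{L^{p+1}}^{p+1}+r\|u\|_{L^p}^p .
\]
For $1<p\le p^*:=\chi/(\chi-r)$ the middle coefficient is $\le0$, so Gr\"onwall gives $\|u(t)\|_{L^p}\le e^{rt}\|u_0\|_{L^p}$ on $[0,T]$ and, in addition, $\int_0^T\|u^{p/2}\|_{\dot{H}^{\alpha/2}}^2\,dt<\infty$ (if $\chi\le r$, this coefficient is $\le0$ for every $p$ and all $L^p$ norms are controlled outright). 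Now the hypothesis enters: $\alpha>1-r/\chi=1/p^*$ lets one fix $p_0<p^*$ with $\alpha p_0>1$, so $L_0:=\|u\|_{L^\infty(0,T;L^{p_0})}<\infty$. For $\alpha<2$ I would then argue pointwise on $M(t):=\|u(t)\|_{L^\infty}=u(\bar x,t)$: at the maximum $\bar x$, $\partial_x(uB(u))(\bar x)=M(M-v(\bar x))\le M^2$ (using $v\ge0$), $f(u(\bar x))=rM-rM^2$, while splitting the singular integral in \eqref{Lkernelalpha} at radius $\rho\sim(L_0/M)^{p_0}$ and applying H\"older against $\|u\|_{L^{p_0}}$ gives the nonlinear lower bound $\Lambda^\alpha u(\bar x)\ge c\,M^{1+\alpha p_0}L_0^{-\alpha p_0}-cL$; as $1+\alpha p_0>2$, this beats the $\chi M^2$ term, so $\frac{d}{dt}M\le0$ once $M$ exceeds an explicit threshold, giving $\sup_{[0,T]}\|u(t)\|_{L^\infty}<\infty$. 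To cover the whole range $\alpha\le2$ (in particular $\alpha=2$, where the singular-integral representation degenerates) one instead feeds the banked control of $\int_0^T\|u^{p/2}\|_{\dot{H}^{\alpha/2}}^2$ into a Gagliardo--Nirenberg inequality to absorb $\frac{\chi(p-1)}{p}\|u\|_{L^{p+1}}^{p+1}$ in the $L^p$ estimate for every $p$ --- the interpolation closing precisely because $\alpha p^*>1$ --- and runs a Moser-type iteration, tracking the $p$-dependence of the constants, to reach $L^\infty$.

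\textbf{Propagation of $H^3$, uniqueness, analyticity, and the main obstacle.} Once $\|u\|_{L^\infty}$ is bounded on $[0,T]$, a standard energy estimate closes in $H^3$: writing $\partial_x(uB(u))=u(u-v)+B(u)\partial_x u$, the only delicate piece is the transport term $B(u)\partial_x u$, handled by a Kato--Ponce commutator bound together with the cancellation coming from $\partial_x B(u)=u-v$, where $B(u)$ and $u-v$ are tame because $(1+\Lambda^\beta)^{-1}$ is smoothing; the diffusion contributes $-\|\Lambda^{3+\alpha/2}u\|_{L^2}^2\le0$, and Gr\"onwall yields $u\in C([0,T],H^3)$. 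Uniqueness follows from an $L^2$ (or $\dot{H}^{-1}$) estimate for the difference of two solutions, and real analyticity for $t\in(0,T]$ from the local analyticity of Section~\ref{sec:4}, which persists as long as the solution exists. The hard part is the passage $L^{p^*}\to L^\infty$: unlike for Burgers \eqref{eqBurgers} there is no $L^2$ energy balance to test against, so one must exploit the exact cancellation $\partial_x B(u)=u-v$ together with $v\ge0$, and $\alpha=1-r/\chi$ is exactly the threshold at which this mechanism stops closing.
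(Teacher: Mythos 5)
Your proposal is correct in its essentials and reaches the stated range $\alpha>1-r/\chi$, but the core step is carried out by a genuinely different mechanism than the paper's. Both arguments share the same two ingredients: the logistic-propagated bound $\|u(t)\|_{L^{p}}\le e^{rt}\|u_0\|_{L^{p}}$ for $p\le p^*=\chi/(\chi-r)$ (Lemma \ref{Lemmanorms}, part (3), with $p=s+1$) and the cancellation $\pax B(u)=\Lambda^\beta v\le u$ (Lemma \ref{lemmaboundsv}), which caps the drift contribution at the spatial maximum by $\chi M^2$. Where you diverge is in how the dissipation beats $\chi M^2$: the paper works with $\rho=u^2$, keeps the full C\'ordoba--C\'ordoba quadratic form $I(u)$, and extracts from it a cubic lower bound $\gamma u^3$ with a \emph{tunable} coefficient (set to $\gamma=2\chi$ so as to exactly absorb the drift), via H\"older continuity of the primitive $U$ and a two-radius splitting $R\le L$ of the singular integral; it first lands on $\alpha>1-\tfrac{r}{2\chi}$ and needs a final $\delta$-perturbation of the splitting exponent to recover $\alpha>1-r/\chi$. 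You instead feed the $L^{p_0}$ bound, with $p_0<p^*$ chosen so that $\alpha p_0>1$, directly into the nonlinear maximum principle (the paper's Lemma \ref{lemaaux3} with general $p$ --- the paper itself only ever invokes it with $p=1$, in Sections \ref{sec:5} and \ref{sec:7}), obtaining $\Lambda^\alpha u(\bar x)\gtrsim M^{1+\alpha p_0}/L_0^{\alpha p_0}$ with exponent strictly above $2$, which dominates $\chi M^2$ for large $M$ irrespective of constants. This is lighter bookkeeping, hits the sharp threshold $\alpha p^*>1\iff\alpha>1-r/\chi$ in one step, and is arguably the more transparent proof; what it gives up is the explicit damped structure $\pat\rho+\Lambda^\alpha\rho+2r\rho^{3/2}\le\dots$ that the paper's version produces. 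Two small caveats, neither fatal and both shared with the paper at the same level of rigor: the positivity of $v$ via a ``nonnegative kernel of $(1+\Lambda^\beta)^{-1}$'' (or the paper's minimum-principle argument) implicitly needs $\beta\le2$; and the $\alpha=2$ endpoint and the $H^3$ propagation/uniqueness are only sketched, as they are in the paper (``standard energy methods'').
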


In particular, we obtain the following corollary:
\begin{corol}
Given $r\geq\chi>0$, the system \eqref{eq1}, \eqref{eqB} (or \eqref{eq1}, \eqref{eqB2}) with \eqref{eqf} posesses a global in time, classical solution for every initial data $u_0 \in H^{3}$. 
\end{corol}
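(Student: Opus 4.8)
The plan is to deduce this directly from Theorem \ref{th:globalstrong} by observing that the hypothesis $r\geq\chi>0$ forces the condition $1-r/\chi<\alpha$ to be automatically satisfied for every admissible $\alpha$. Indeed, if $r\geq\chi$ then $r/\chi\geq 1$, so $1-r/\chi\leq 0<\alpha$ for any order of diffusion $\alpha>0$; in particular the restriction $1-r/\chi<\alpha\leq 2$ reduces to the empty constraint $0<\alpha\leq 2$. Since the standard drift-diffusion equation lives in the range $0\leq\alpha\leq 2$ anyway (recall the discussion following \eqref{eqDD}), there is genuinely nothing left to assume on $\alpha$.

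Concretely, I would argue as follows. Fix any $u_0\in H^3$ with $u_0\geq 0$ and fix the order of diffusion $\alpha\in(0,2]$ implicit in \eqref{eq1}. Because $r\geq\chi>0$, we have $r/\chi\geq 1$ and hence $\max\{1-r/\chi,0\}=0<\alpha$. Thus the pair $(\alpha,u_0)$ meets all the hypotheses of Theorem \ref{th:globalstrong}: positivity of $r,\chi,\alpha,\beta$, nonnegativity of $u_0$, the regularity $u_0\in H^3$, and the diffusion bound $1-r/\chi<\alpha\leq 2$. Invoking that theorem yields the unique global-in-time solution $u\in C([0,T],H^3)$ for every finite $T$, which is moreover real analytic for $t\in(0,T]$ and satisfies the uniform $L^1$ bound $\sup_{0\leq t<\infty}\|u(t)\|_{L^1}\leq\max\{2\pi,\|u_0\|_{L^1}\}$. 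In particular $u$ is a classical solution, since membership in $C([0,T],H^3)$ together with the analyticity for positive times gives the required smoothness to interpret \eqref{eq1} pointwise.

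There is no real obstacle here: the corollary is precisely the ``endpoint'' regime of Theorem \ref{th:globalstrong} in which the logistic damping dominates the chemotactic drift so strongly that \emph{no} diffusion is needed for global smoothness. The only point worth a sentence of care is to note that the statement should be read with the usual convention $0\leq\alpha\leq 2$ on the diffusion order, so that ``every initial data'' is quantified over a nonempty range of $\alpha$; once that is said, the corollary is an immediate specialization. I would therefore present the proof in two lines: the inequality $1-r/\chi\leq 0$, followed by a citation of Theorem \ref{th:globalstrong}.
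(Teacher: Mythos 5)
Your proposal is correct and matches the paper's (implicit) argument exactly: the corollary is stated as an immediate specialization of Theorem \ref{th:globalstrong}, and the whole content is the observation that $r\geq\chi$ gives $1-r/\chi\leq 0<\alpha$, so the diffusion constraint is vacuous for every $\alpha\in(0,2]$. Your added remark that one should still require $\alpha>0$ (since the theorem's hypotheses demand it) is a reasonable point of care and does not change the conclusion.
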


The idea of the proof of Theorem \ref{th:globalstrong} is to use \emph{pointwise} bounds on the $\Lambda^\alpha u$ term that control the reaction term and prevent blow-up. This method has been developed by C\'ordoba \& C\'ordoba \cite{cordoba2003pointwise, cor2} and Constantin \& Vicol \cite{constantin2012nonlinear}. It has proved itself useful in many different equations: see \cite{cor2,constantin2013long} for the critical surface quasi-geostrophic equation, \cite{c-g09, c-g-o08, ccfgl, CGO, ccgs-10, G} for its application to the Muskat problem and \cite{AGM, GO, BG} for its application to the aggregation equations of Keller-Segel type. 

Notice also that the solutions are global but, as far as we know, they may be growing in $\|u (t) \|_{L^\infty}$ without bound. In other words, the logistic term is not strong enough to provide global, uniform estimates. In particular, the estimates for $\|u\|_{L^\infty}$ depend on the estimate for the diffusion term. In this context, let us provide a result concerning the large-time behaviour:
\begin{prop}\label{th:largetime}
Let $r,\chi,\beta>0$, $\alpha>1$ and $u_0\geq0$, $u_0\in H^{3}$. Then,
the global in time solution for \eqref{eq1}, \eqref{eqB} (or \eqref{eq1}, \eqref{eqB2}) with \eqref{eqf} verifies
$$
\sup_{0\leq t <\infty}\|u(t)\|_{L^\infty}\leq C(\alpha,r,\chi, \|u_0\|_{L^\infty},\|u_0\|_{L^1}).
$$
For $\alpha=1$ we have 
$$
\sup_{0\leq t <\infty}\|u(t)\|_{L^\infty}\leq C(r,\chi, \|u_0\|_{L^\infty},\|u_0\|_{L^1}),
$$
provided $\chi < r+\frac{1}{2\pi \max(\| u_0\|_{L^1}, 2 \pi)}$.
\end{prop}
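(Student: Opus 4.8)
The plan is to run a maximum-principle argument at the point where $u(t)$ attains its spatial maximum, combined with the pointwise lower bound for the fractional Laplacian. Since Theorem \ref{th:globalstrong} already provides a global smooth solution (note $\alpha>1\geq 1-r/\chi$, so we are in its range), we may differentiate $\|u(t)\|_{L^\infty}$ and work with classical derivatives. Let $\bar x(t)$ be a point where $u(\cdot,t)$ is maximal and set $M(t)=u(\bar x(t),t)=\|u(t)\|_{L^\infty}$; by Rademacher, $M'(t)=\pat u(\bar x(t),t)$ for a.e. $t$. Evaluating \eqref{eq1} at $\bar x(t)$ and using $\pax u(\bar x(t),t)=0$,
$$
M'(t)=-\Lambda^\alpha u(\bar x(t),t)+\chi u(\bar x(t),t)\,\Lambda^{\beta-1}H(1+\Lambda^\beta)^{-1}\pax u(\bar x(t),t)\big|_{\text{via }\pax(uB(u))}+rM(t)(1-M(t)).
$$
The transport term must be expanded as $\chi\big(\pax u\, B(u)+u\,\pax B(u)\big)$ at $\bar x$, where the first summand vanishes. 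So the genuinely delicate term is $\chi M(t)\,\pax B(u)(\bar x(t),t)=\chi M(t)\,\Lambda^\beta H v(\bar x(t),t)$ with $v=(1+\Lambda^\beta)^{-1}u$; writing $\Lambda^\beta v=u-v$, this equals $\chi M(t)\,H(u-v)(\bar x(t),t)$, which is NOT sign-definite and not obviously bounded by $M$ pointwise. This is the main obstacle.

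The key step to control it is to use the $L^1$ bound from Theorem \ref{th:globalstrong}, namely $\|u(t)\|_{L^1}\leq \max\{2\pi,\|u_0\|_{L^1}\}=:L$, together with the fact that $H$ and $(1+\Lambda^\beta)^{-1}$ are bounded from $L^1$ into spaces that embed into $L^\infty$ only after paying with a positive power of diffusion. Concretely, I would use the $L^1\to\dot W^{\alpha/2-\delta,1}$-type smoothing implicit in the appendix (or standard fractional-heat-kernel estimates) to interpolate: for $\alpha>1$ one has, by a Gagliardo–Nirenberg inequality between $L^1$, $L^\infty$ and the fractional-Laplacian dissipation, a pointwise-type bound of the form $|H(u-v)(\bar x)|\leq \varepsilon\,\Lambda^\alpha u(\bar x)/M + C_\varepsilon\, G(L,M)$ where $G$ grows sublinearly in $M$; the positive dissipation $-\Lambda^\alpha u(\bar x(t),t)\leq 0$ is what lets us absorb the bad term. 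Combined with the pointwise lower bound for $\Lambda^\alpha$ at an interior maximum (which is nonnegative, and in fact bounded below by a positive multiple of $M^{1+\alpha}/L^{\alpha}$ via the standard Córdoba–Córdoba/Constantin–Vicol nonlinear lower bound on the torus), one arrives at a differential inequality
$$
M'(t)\leq -c\,\frac{M(t)^{1+\alpha}}{L^{\alpha}}+\chi\,C\,M(t)^{\theta}L^{1-\theta}+rM(t)-rM(t)^2
$$
for some $\theta<1+\alpha$ (for $\alpha>1$ one can take $\theta$ strictly below $1+\alpha$, so the dissipative term dominates for large $M$). It follows that $M(t)$ stays bounded by a constant depending only on $\alpha,r,\chi,\|u_0\|_{L^\infty},\|u_0\|_{L^1}$, which is the claim.

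For the critical case $\alpha=1$ the nonlinear lower bound on $\Lambda u(\bar x)$ degenerates to being merely quadratic in $M$ (of the form $c\,M^2/L$), so it no longer beats an arbitrary $\chi\,M^2$ term; this is exactly why a smallness hypothesis is needed. Here I would be more careful: use that at the maximum, $\Lambda u(\bar x(t),t)\geq \frac{1}{2\pi L}M(t)^2$ on $\TT$ (the sharp constant coming from \eqref{Lkernel} and the $L^1$ mass constraint), and bound the bad term by $\chi M(t)\,\|H(u-v)\|_{L^\infty}\leq \chi M(t)^2$ after noting $\|u-v\|_{L^1}\leq \|u\|_{L^1}\leq L$ and using that on $\TT$ the Hilbert transform composed with the right truncation is bounded by the sup-norm up to the relevant normalisation — more precisely, tracking constants one finds the bad term is at most $\chi M(t)^2$. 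Then
$$
M'(t)\leq -\frac{1}{2\pi L}M(t)^2+\chi M(t)^2+rM(t)-rM(t)^2=\Big(\chi-r-\frac{1}{2\pi L}\Big)M(t)^2+rM(t),
$$
and the hypothesis $\chi<r+\frac{1}{2\pi\max(\|u_0\|_{L^1},2\pi)}=r+\frac{1}{2\pi L}$ makes the coefficient of $M^2$ negative, forcing $M(t)\leq \max\{M(0),\,C(r,\chi,L)\}$ for all $t$. The main work in writing this up cleanly is (a) justifying differentiation of $\|u(t)\|_{L^\infty}$ and the evaluation of the nonlocal terms at the moving maximum (standard once smoothness is known, via the appendix estimates and Rademacher's theorem), and (b) pinning down the sharp constant $\frac{1}{2\pi L}$ in the lower bound for $\Lambda u$ at a maximum and the matching bound $\chi M^2$ for the drift term, since the whole critical statement hinges on comparing precisely these two constants.
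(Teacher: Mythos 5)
Your overall strategy (evaluate the equation at the moving spatial maximum, use the nonlinear lower bound for $\Lambda^\alpha$ at that point together with the $L^1$ control, and close an ODE comparison) is exactly the paper's. However, there is a genuine error in your identification of the drift term at the maximum, and it sends you down an unnecessary and unjustified detour. Since $\Lambda=\pax H$, one has $\pax B(u)=\pax\Lambda^{\beta-1}Hv=\Lambda^{\beta-1}\pax Hv=\Lambda^{\beta}v$, \emph{not} $H\Lambda^\beta v=H(u-v)$ as you wrote: there is no leftover Hilbert transform. The correct term at the maximum is $\chi\,\bar u\,\Lambda^\beta v(\bar x)$, and by Lemma \ref{lemmaboundsv} part (1) (namely $v\geq 0$, hence $\Lambda^\beta v=u-v\leq u$) it is bounded pointwise by $\chi\,\bar u^2$. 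Your ``main obstacle'' therefore does not exist, and your proposed cure for it is not a valid argument: the Hilbert transform is \emph{not} bounded on $L^\infty$, so the claim $\|H(u-v)\|_{L^\infty}\leq M$ in the critical case is false as stated, and the interpolation bound $|H(u-v)(\bar x)|\leq \varepsilon\,\Lambda^\alpha u(\bar x)/M+C_\varepsilon G(L,M)$ — controlling a nonlocal quantity at one point by the value of $\Lambda^\alpha u$ at that same point — is not a standard estimate and is not substantiated. As written, neither case of the proposition is actually proved.

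Once the drift term is computed correctly, the rest of your outline coincides with the paper: Lemma \ref{lemaaux3} with $p=1$, $\gamma_1=\mathcal{N}=\max\{\|u_0\|_{L^1},2\pi\}$ gives $\Lambda^\alpha u(x_t)\geq \frac{c_\alpha}{2^\alpha}\frac{\bar u^{1+\alpha}}{\mathcal{N}^\alpha}$ whenever $\bar u\geq 2\mathcal{N}/\pi$ (the complementary case gives a bound for free, a dichotomy you should state explicitly), leading to $\frac{d}{dt}\bar u\leq \chi\bar u^2+r\bar u(1-\bar u)-\frac{c_\alpha}{2^\alpha}\frac{\bar u^{1+\alpha}}{\mathcal{N}^\alpha}$. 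For $\alpha>1$ the last term dominates for large $\bar u$; for $\alpha=1$, using $c_1=1/\pi$, the coefficient of $\bar u^2$ is $\chi-r-\frac{1}{2\pi\mathcal{N}}$, which is negative exactly under the stated hypothesis. Your final constants and the blow-up/comparison argument are correct; the gap is entirely in the treatment of the nonlocal drift term.
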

\begin{corol}
For $\alpha=1$ 
the global in time solution for \eqref{eq1}, \eqref{eqB} (or \eqref{eq1}, \eqref{eqB2}) with \eqref{eqf} verifies
$$
\sup_{0\leq t <\infty}\|u(t)\|_{L^\infty}\leq C(\alpha,r,\chi,  \|u_0\|_{L^\infty},\|u_0\|_{L^1}).
$$
if one of the following conditions is satisfied
\begin{itemize}
\item $\chi \le r$ and the initial data may be arbitrarily large.
\item $\|u(t)\|_{L^1} \le 2 \pi$ and $\chi < r+\frac{1}{4\pi^2}$. In particular, for  $\chi \le \frac{1}{4\pi^2}$ any $r>0$ is admissible.
\end{itemize}
\end{corol}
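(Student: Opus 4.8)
The plan is to obtain this statement directly from Proposition \ref{th:largetime}, combined with the uniform-in-time mass control furnished by Theorem \ref{th:globalstrong}. First I would record that for $\alpha=1$ and any $r,\chi>0$ one has $1-r/\chi<1=\alpha$, so Theorem \ref{th:globalstrong} applies: there is a unique global smooth solution, and it satisfies
$$
\sup_{0\leq t<\infty}\|u(t)\|_{L^1}\leq \max\{2\pi,\|u_0\|_{L^1}\}.
$$
In particular, if the hypothesis $\|u(t)\|_{L^1}\leq 2\pi$ of the second bullet holds (equivalently $\|u_0\|_{L^1}\leq 2\pi$, since that bound must in particular hold at $t=0$), then $\max(\|u_0\|_{L^1},2\pi)=2\pi$.

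Next I would invoke the $\alpha=1$ part of Proposition \ref{th:largetime}, whose conclusion — the desired uniform bound on $\|u(t)\|_{L^\infty}$ — is valid under the smallness condition
$$
\chi<r+\frac{1}{2\pi \max(\|u_0\|_{L^1},2\pi)}.
$$
It therefore only remains to verify that each of the two listed sets of assumptions forces this threshold inequality. For the first bullet, $\chi\leq r$: since the term $\tfrac{1}{2\pi \max(\|u_0\|_{L^1},2\pi)}$ is strictly positive, one has $\chi\leq r<r+\tfrac{1}{2\pi \max(\|u_0\|_{L^1},2\pi)}$, and the size of $\|u_0\|_{L^1}$ played no role, so the data may indeed be arbitrarily large. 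For the second bullet, the reduction $\max(\|u_0\|_{L^1},2\pi)=2\pi$ noted above turns the threshold into $\chi<r+\tfrac{1}{4\pi^2}$, which is exactly what is assumed; and when $\chi\leq\tfrac{1}{4\pi^2}$ we simply note $\chi\leq\tfrac{1}{4\pi^2}<r+\tfrac{1}{4\pi^2}$ for any $r>0$. In all cases Proposition \ref{th:largetime} yields the claimed bound, with a constant depending only on $r,\chi,\|u_0\|_{L^\infty},\|u_0\|_{L^1}$.

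There is no genuine obstacle here: all the analytic content is in Proposition \ref{th:largetime} (and in the mass bound of Theorem \ref{th:globalstrong}), and the corollary is essentially the bookkeeping of unpacking the constant $\tfrac{1}{2\pi \max(\|u_0\|_{L^1},2\pi)}$ from that threshold. The only step needing a line of care is observing that imposing $\|u(t)\|_{L^1}\leq 2\pi$ collapses the maximum in the denominator to $2\pi$, converting the abstract smallness condition of Proposition \ref{th:largetime} into the explicit inequality $\chi<r+\tfrac{1}{4\pi^2}$ stated in the corollary.
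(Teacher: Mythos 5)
Your proposal is correct and follows exactly the route the paper intends: the corollary is stated as an immediate consequence of Proposition \ref{th:largetime}, obtained by checking that each set of hypotheses implies the threshold $\chi < r+\frac{1}{2\pi\max(\|u_0\|_{L^1},2\pi)}$, with the second bullet collapsing the maximum to $2\pi$. Your additional observations (that Theorem \ref{th:globalstrong} supplies the global solution for $\alpha=1$ since $1-r/\chi<1$, and that the mass bound makes $\|u(t)\|_{L^1}\le 2\pi$ equivalent to the same bound on the data) are exactly the right bookkeeping.
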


\section{Preliminary results}\label{sec:4}
\subsection{Elliptic estimates for $v$}
We need certain elliptic estimates of $v$, defined in \eqref{eqvB} and \eqref{eqvB2}:
\begin{lem}\label{lemmaboundsv} For sufficiently smooth, nonnegative function $u$ and $v$ given by  \eqref{eqvB} one has
\begin{enumerate}
\item $
\min_x v(x)\geq0, $ in particular $\Lambda^\beta v(x)\leq u(x)$,
\item 
$ \|v (t) \|_{L^\infty}\leq \|u(t)\|_{L^\infty}, $
\item 
$ \|\Lambda^\beta v(t)\|_{L^\infty }\leq 4\|u(t)\|_{L^\infty }, $
\item 
$
\|\pax \Lambda^\beta v (t) \|_{L^\infty}\leq 4\|\pax u (t) \|_{L^\infty},
$
\item 
$
\frac{1}{2}\|\Lambda^{\beta+s} v (t) \|_{L^2}^2+\|\Lambda^{\beta/2+s}v (t) \|_{L^2}^2\leq \frac{1}{2}\|\Lambda^s u (t) \|_{L^2}^2,\quad \forall\,s\geq0.
$
\end{enumerate}
\end{lem}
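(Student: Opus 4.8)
All five estimates for $v=(1+\Lambda^\beta)^{-1}u$ flow from working on the Fourier side, where $\hat v(k)=(1+|k|^\beta)^{-1}\hat u(k)$, combined with the positivity-preserving nature of the resolvent $(1+\Lambda^\beta)^{-1}$. The natural order is to get the sign and sup-norm bounds first (items 1--4), since these are essentially maximum-principle statements, and then do the Hilbert-space estimate (item 5) separately by a clean Plancherel computation.

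\emph{Items 1--2 (positivity and the contraction bound).} I would note that $v$ solves the elliptic equation $v+\Lambda^\beta v=u$ on $\TT$. Since $u\ge 0$, the weak maximum principle for $1+\Lambda^\beta$ gives $v\ge 0$; concretely, if $v$ attains its minimum at $x_0$ then $\Lambda^\beta v(x_0)\le 0$ by the pointwise kernel representation \eqref{Lkernelalpha} (all the kernels are nonnegative and $v(x_0)-v(x_0-\eta)\le 0$), hence $v(x_0)\ge v(x_0)+\Lambda^\beta v(x_0)=u(x_0)\ge 0$. The same argument at the maximum point $x_1$ gives $\Lambda^\beta v(x_1)\ge 0$, so $v(x_1)\le u(x_1)\le\|u\|_{L^\infty}$, which is item 2. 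The inequality $\Lambda^\beta v\le u$ is then just a rewriting of the equation using $v\ge 0$.

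\emph{Items 3--4 (bounds on $\Lambda^\beta v$ and $\pax\Lambda^\beta v$).} From the equation, $\Lambda^\beta v = u - v$, so $\|\Lambda^\beta v\|_{L^\infty}\le \|u\|_{L^\infty}+\|v\|_{L^\infty}\le 2\|u\|_{L^\infty}$, which is even better than the stated constant $4$ (so the claim holds comfortably). Differentiating, $\pax\Lambda^\beta v = \pax u - \pax v$, and since $\pax v=(1+\Lambda^\beta)^{-1}\pax u$ is again a resolvent applied to $\pax u$, one has $\|\pax v\|_{L^\infty}\le\|\pax u\|_{L^\infty}$ — this needs the fact that $(1+\Lambda^\beta)^{-1}$ is an $L^\infty$-contraction, which on the torus follows because its convolution kernel $K=\mathcal{F}^{-1}((1+|k|^\beta)^{-1})$ is nonnegative with $\int_\TT K=1$ (nonnegativity of $K$ is the one genuinely nontrivial input; it can be seen from $K=\int_0^\infty e^{-t}p_t\,dt$ where $p_t$ is the nonnegative kernel of $e^{-t\Lambda^\beta}$, a subordinated stable semigroup). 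Hence $\|\pax\Lambda^\beta v\|_{L^\infty}\le 2\|\pax u\|_{L^\infty}\le 4\|\pax u\|_{L^\infty}$.

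\emph{Item 5 (the energy estimate).} This is pure Plancherel. With $a_k=|k|^\beta$ one has, for each mode, $\frac12|k|^{2\beta+2s}|\hat v(k)|^2+|k|^{\beta+2s}|\hat v(k)|^2 = |k|^{2s}|\hat u(k)|^2\cdot\frac{\frac12 a_k^2+a_k}{(1+a_k)^2}$, and $\frac{\frac12 a^2+a}{(1+a)^2}\le\frac12$ for all $a\ge 0$ (equivalently $a^2+2a\le 1+2a+a^2$). Summing over $k$ and applying Plancherel gives the claim, with the bound being uniform in $s\ge 0$ as asserted. The main obstacle in the whole lemma is the positivity of the resolvent kernel needed for items 1, 3, 4; everything else is elementary Fourier analysis, and I would handle the positivity either via the maximum-principle argument sketched above (cleanest, avoids kernel computations) or via the subordination formula.
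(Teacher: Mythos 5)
Your proof is correct and follows essentially the same route as the paper's: items (1)--(4) via the maximum principle for $v+\Lambda^\beta v=u$ evaluated at extrema (of $v$, of $\Lambda^\beta v$, and of $\pax v$), and item (5) via Plancherel on the Fourier multiplier $(1+|k|^\beta)^{-1}$. The only cosmetic difference is that for item (4) you additionally offer the subordination/kernel-positivity argument for $(1+\Lambda^\beta)^{-1}$, a valid but heavier alternative to the maximum-principle route the paper intends (both, like the paper's kernel representation \eqref{Lkernelalpha}, implicitly require $\beta\le 2$).
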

\begin{proof}
Recall that  $v$ given by \eqref{eqvB} means that it  is a solution to
\begin{equation}\label{eqv}
v+\Lambda^\beta v=u.
\end{equation}
\textbf{Part (1)} Let us evaluate this equation at $x_t$ such that $\min_{x}v(x,t)=v(x_t,t)$. Due to the definition of $x_t$, we have $v(x_t)-v(x_t-\eta)\leq0$, and, consequently, using \eqref{Lkernelalpha}, $\Lambda^\beta v(x_t)\leq0$. Using this fact we get
\begin{equation}\label{boundlapbeta2c}
v(x_t)\geq\Lambda^\beta v(x_t) + v(x_t) \geq \min_{x}u(x,t)\geq0.
\end{equation}
The non negativity of $u$ starting from nonnegative data follows from a similar technique of tracking minimum, compare \cite{BG}. \\
\textbf{Part (2)} In the same way, if we evaluate at $\tilde{x}_t$ such that $\max_{x}v(x,t)=v(\tilde{x}_t,t)$, we have $\Lambda^\beta v(\tilde{x}_t)\geq0$ and we get the bound
$$
v(\tilde{x}_t)\leq v(\tilde{x}_t)+\Lambda^\beta v(\tilde{x}_t) \leq \max_{x}u(x,t).
$$
\textbf{Part (3) and (4).} Now we evaluate the equation \eqref{eqv} at $X_t$ such that $\max_{x}\Lambda^\beta v(x,t)=\Lambda^\beta v(X_t,t)\geq0$. Due to part (2), we have
\begin{equation}\label{boundlapbeta2}
\Lambda^\beta v(X_t)= u(X_t) - v(X_t) \leq 2\|u(t)\|_{L^\infty}.
\end{equation}
Similarly, for $\tilde{X}_t$ such that $\min_{x}\Lambda^\beta v(x,t)=\Lambda^\beta v(\tilde{X}_t,t)\leq0$ we have
\begin{equation}\label{boundlapbeta1}
-\Lambda^\beta v(\tilde{X}_t)=-u(\tilde{X}_t) +  v(\tilde{X}_t) \leq 2\|u(t)\|_{L^\infty}.
\end{equation}
Collecting estimates \eqref{boundlapbeta2} and \eqref{boundlapbeta1}, we obtain $(3)$. In the same way, we can prove $(4)$.

\textbf{Part (5).} It follows from \eqref{eqv}.
\end{proof}

In turn, the case where $v$ is defined by \eqref{eqvB2} is studied in the following lemma.
\begin{lem}\label{lemmaboundsv2}  For sufficiently smooth, nonnegative function $u$ and $v$ given by  \eqref{eqvB} one has
\begin{enumerate}
\item 
$
\|\Lambda^\beta v(t)\|_{L^\infty }\leq \|u(t)\|_{L^\infty },
$
\item 
$
\|\pax \Lambda^\beta v(t)\|_{L^\infty}\leq \|\pax u(t)\|_{L^\infty},
$
\item 
$
\frac{1}{2}\|\Lambda^{\beta+s} v(t)\|_{L^2}^2\leq \frac{1}{2}\|\Lambda^s u(t)\|_{L^2}^2,\quad \forall\,s\geq0.
$
\end{enumerate}
\end{lem}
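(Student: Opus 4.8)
The plan is to exploit that $v$ defined by \eqref{eqvB2} solves the elementary identity $\Lambda^\beta v = u - \langle u\rangle$, which is even simpler than the equation $v + \Lambda^\beta v = u$ used for Lemma \ref{lemmaboundsv}. First I would record this identity directly from the definition \eqref{eqvB2}: applying $\Lambda^\beta$ to $v=\Lambda^{-\beta}(u-\langle u\rangle)$ gives $\Lambda^\beta v = u - \langle u\rangle$ (note $u-\langle u\rangle$ has zero mean, so $\Lambda^{-\beta}$ is well-defined on it via Fourier series, as remarked after \eqref{eqvB2}).

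For part (1), I would simply take the $L^\infty$ norm of both sides of $\Lambda^\beta v = u - \langle u\rangle$. Since $u\geq 0$, we have $0\leq \langle u\rangle \leq \|u\|_{L^\infty}$, hence $\|u-\langle u\rangle\|_{L^\infty}\leq \|u\|_{L^\infty}$ (indeed, for each $x$, $-\|u\|_{L^\infty}\leq -\langle u\rangle \leq u(x)-\langle u\rangle \leq \|u\|_{L^\infty}$). This yields $\|\Lambda^\beta v(t)\|_{L^\infty}\leq \|u(t)\|_{L^\infty}$ immediately. For part (2), I would differentiate the identity in $x$: since $\langle u\rangle$ is constant in $x$, we get $\pax\Lambda^\beta v = \pax u$, and taking $L^\infty$ norms gives equality $\|\pax\Lambda^\beta v(t)\|_{L^\infty} = \|\pax u(t)\|_{L^\infty}$, in particular the claimed inequality. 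For part (3), I would apply $\Lambda^s$ to the identity to obtain $\Lambda^{\beta+s}v = \Lambda^s(u-\langle u\rangle) = \Lambda^s u$ for $s>0$ (the mean drops out under any positive power of $\Lambda$), and for $s=0$ one gets $\|\Lambda^\beta v\|_{L^2} = \|u-\langle u\rangle\|_{L^2}\leq \|u\|_{L^2}$; squaring and multiplying by $\tfrac12$ gives the stated bound $\tfrac12\|\Lambda^{\beta+s}v(t)\|_{L^2}^2\leq \tfrac12\|\Lambda^s u(t)\|_{L^2}^2$.

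There is essentially no main obstacle here: unlike Lemma \ref{lemmaboundsv}, where the presence of the zeroth-order term forced a maximum-principle (tracking-the-extremum) argument using the kernel representation \eqref{Lkernelalpha}, here the relation between $v$ and $u$ is purely that of applying $\Lambda^{\pm\beta}$, so everything follows by taking norms of an exact identity. The only minor point to be careful about is the harmless notational slip in the statement (it says "$v$ given by \eqref{eqvB}" but clearly means \eqref{eqvB2}); the proof uses \eqref{eqvB2} throughout. One could also note that these estimates are in fact sharper than their counterparts in Lemma \ref{lemmaboundsv} — the constant $4$ and the extra positive term $\|\Lambda^{\beta/2+s}v\|_{L^2}^2$ disappear — precisely because the operator $\Lambda^{-\beta}$ (restricted to mean-zero functions) is "cleaner" than $(1+\Lambda^\beta)^{-1}$.
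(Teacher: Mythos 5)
Your proof is correct and takes essentially the same route as the paper, which merely observes that $v$ solves $\Lambda^\beta v = u-\langle u\rangle$ and declares the rest a ``simpler version'' of the proof of Lemma \ref{lemmaboundsv}; your pointwise/Fourier reading of that identity (using $0\leq\langle u\rangle\leq\|u\|_{L^\infty}$ for part (1), $\pax\langle u\rangle=0$ for part (2), and $\Lambda^s\langle u\rangle=0$ resp.\ orthogonality for part (3)) is exactly what that remark amounts to. You are also right that the reference to \eqref{eqvB} in the statement is a typo for \eqref{eqvB2}.
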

\begin{proof}
In this case, $v$ is a solution to
\begin{equation}\label{eqv2}
\Lambda^\beta v=u-\langle u\rangle.
\end{equation}
Now, the proof is a simpler version of respective part of proof of the previous lemma.
\end{proof}

\subsection{Local existence}
We start with the local existence result.
\begin{lem}\label{localexistence} Let $u_0\in H^3$ be a non-negative initial data. Then, if $0\leq \alpha\leq 2$, there exists a time $T^*(u_0)>0$ such that there exists a non-negative solution 
$$
u(t)\in C([0,T^*(u_0)],H^3)
$$
to the equation \eqref{eq1}, \eqref{eqB} (or to  \eqref{eq1}, \eqref{eqB2}) with \eqref{eqf}. Moreover, if for a given $T$ the solution verifies the following bound
$$
\int_0^T \|u(s)\|_{L^\infty}ds<\infty,
$$
then the solution may be extended up to time $T+\delta$ for small enough $0<\delta$. Furthermore, under the restriction $1\leq \alpha\leq2$, the solution becomes real analytic.
\end{lem}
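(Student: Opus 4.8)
I would use the classical energy method: regularize, prove a closed a priori estimate in $H^3$, pass to the limit, and then handle non-negativity, the continuation criterion and analyticity separately. The structural fact I would exploit throughout is that $B$ is a smoothing operator of order one: with $v$ as in \eqref{eqvB} (resp.\ \eqref{eqvB2}) one has $B(u)=\Lambda^{\beta-1}Hv$ and, in Fourier, $\pax B(u)=\Lambda^{\beta}v$, while Lemma \ref{lemmaboundsv} (resp.\ \ref{lemmaboundsv2}) yields $\|B(u)\|_{L^\infty}+\|\pax B(u)\|_{L^\infty}\lesssim\|u\|_{L^\infty}$, $\|\pax^2 B(u)\|_{L^\infty}\lesssim\|\pax u\|_{L^\infty}$, and $\|\Lambda^{\beta+s}v\|_{L^2}\le\|\Lambda^{s}u\|_{L^2}$ for all $s\ge0$. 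Thus the drift $\chi\pax(uB(u))=\chi B(u)\pax u+\chi u\,\Lambda^{\beta}v$ is a transport term whose velocity is as regular as $u$, plus a zeroth-order term controlled by $\|u\|_{L^\infty}$. For the construction I would regularize \eqref{eq1} by adding a vanishing viscosity $\epsilon\Delta u_\epsilon$ (which has the good sign at extrema, hence preserves both parabolic solvability and non-negativity), solve the regularized problems by Picard iteration in $H^3$, and derive the $\epsilon$-uniform bound. Pairing $\Lambda^3$ of the equation with $\Lambda^3 u$, the dangerous top-order drift contributions are (i) $\int B(u)\,\Lambda^3 u\,\pax\Lambda^3 u=-\tfrac12\int\Lambda^{\beta}v\,(\Lambda^3 u)^2$, bounded by $2\|u\|_{L^\infty}\|\Lambda^3 u\|_{L^2}^2$; (ii) the commutator $[\Lambda^3,B(u)]\pax u$, handled by Kato--Ponce together with $\|\pax B(u)\|_{L^\infty}\lesssim\|u\|_{L^\infty}$ and $\|\Lambda^3 B(u)\|_{L^2}=\|\Lambda^{\beta+2}v\|_{L^2}\le\|\Lambda^2 u\|_{L^2}$; (iii) $u\,\Lambda^{\beta+2}v$, whose worst piece after one integration by parts is absorbed via $\|\Lambda^{\beta+3}v\|_{L^2}\le\|\Lambda^3 u\|_{L^2}$. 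The logistic term $f(u)=ru(1-u)$ is polynomial, hence controlled in $H^3$ by the algebra property. Since $\|u\|_{L^\infty}+\|\pax u\|_{L^\infty}\lesssim\|u\|_{H^3}$ in one dimension, all terms combine to
$$
\frac{d}{dt}\|u\|_{H^3}^2\le C\bigl(1+\|u\|_{H^3}\bigr)\|u\|_{H^3}^2 ,
$$
giving a uniform bound on $[0,T^*(u_0)]$; $\epsilon\to0$ via Aubin--Lions compactness yields $u\in C([0,T^*],H^3)$, and an $L^2$ energy estimate for the difference of two solutions (closed using $\|u_i\|_{W^{1,\infty}}\lesssim\|u_i\|_{H^3}$) gives uniqueness. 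Non-negativity follows as in Lemma \ref{lemmaboundsv}: evaluating at $x_t$ realizing $\min_x u(\cdot,t)$, where $\pax u(x_t)=0$ and $\Lambda^\alpha u(x_t)\le0$ by \eqref{Lkernelalpha}, one gets $\tfrac{d}{dt}\min_x u\ge-(\|\Lambda^\beta v\|_{L^\infty}+r)\min_x u$ in the a.e.\ sense.

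\textbf{Continuation criterion.} Assume $\int_0^T\|u(s)\|_{L^\infty}\,ds<\infty$. Evaluating \eqref{eq1} at a maximum point of $u$, where $\pax u=0$ and $\Lambda^\alpha u\ge0$, and using $|\Lambda^\beta v|\le4\|u\|_{L^\infty}$, gives $\tfrac{d}{dt}\|u\|_{L^\infty}\le(r+4\chi\|u\|_{L^\infty})\|u\|_{L^\infty}$, so $\|u\|_{L^\infty}$ is in fact bounded on $[0,T]$. Differentiating \eqref{eq1} once, evaluating at a maximum point of $\pax u$ (where $\pax^2 u=0$ and $\Lambda^\alpha\pax u\ge0$), and using $\|\pax B(u)\|_{L^\infty}\lesssim\|u\|_{L^\infty}$ together with $\|\pax^2 B(u)\|_{L^\infty}\lesssim\|\pax u\|_{L^\infty}$ (Lemma \ref{lemmaboundsv}(4), resp.\ \ref{lemmaboundsv2}(2)), one obtains $\tfrac{d}{dt}\|\pax u\|_{L^\infty}\le C(1+\|u\|_{L^\infty})\|\pax u\|_{L^\infty}$, hence $\|\pax u\|_{L^\infty}$ is bounded on $[0,T]$ as well. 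As noted in steps (ii)--(iii), the $H^3$ estimate actually closes in the sharper form $\tfrac{d}{dt}\|u\|_{H^3}^2\le C(\|u\|_{L^\infty},\|\pax u\|_{L^\infty})\|u\|_{H^3}^2$; plugging in the two bounds just obtained and applying Gronwall, $\|u(t)\|_{H^3}$ remains bounded up to $T$, and local existence applied near $T$ extends the solution to $[0,T+\delta]$.

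\textbf{Real analyticity for $1\le\alpha\le2$.} I would run a Gevrey-class estimate, controlling $\Phi(t)=\|e^{\delta t\Lambda}u(t)\|_{H^3}^2$ for a small $\delta>0$. Differentiating in time, the dissipation contributes $-2\|\Lambda^{\alpha/2}e^{\delta t\Lambda}\Lambda^3 u\|_{L^2}^2$, while the time derivative of the analytic weight contributes $+2\delta\|\Lambda^{1/2}e^{\delta t\Lambda}\Lambda^3 u\|_{L^2}^2$; since $\alpha\ge1$ and, on mean-zero functions on $\TT$, $\|\Lambda^{1/2}g\|_{L^2}\le\|\Lambda^{\alpha/2}g\|_{L^2}$, choosing $\delta$ small makes the weight term absorbable with dissipation to spare. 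The nonlinear and reaction terms are estimated in the analytic norm using submultiplicativity $e^{\delta t|\xi+\eta|}\le e^{\delta t|\xi|}e^{\delta t|\eta|}$ (so the analytic weight of a product is bounded by the product of the analytic weights), the commutation of $e^{\delta t\Lambda}$ with $B$, and the elliptic bounds of Lemma \ref{lemmaboundsv}; the Gevrey commutator between $e^{\delta t\Lambda}$ and the nonlinearity produces a half-derivative loss which is exactly absorbed by the remaining $\Lambda^{\alpha/2}$-dissipation when $\alpha\ge1$ (the endpoint $\alpha=1$ being handled as for the critical Burgers equation, using that the drift velocity $B(u)$ is one derivative smoother than $u$). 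This bounds $\Phi$ on a short interval, so $|\hat u(\xi,t)|\lesssim e^{-\delta t|\xi|}$, i.e.\ $u(\cdot,t)$ is real analytic for $t>0$, and the estimate propagates along the $H^3$ interval of existence.

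\textbf{Main obstacle.} Two points are genuinely delicate. First, closing the $H^3$ estimate with a right-hand side controlled only by $\|u\|_{L^\infty}$ and $\|\pax u\|_{L^\infty}$ (not by higher norms) — this is what makes the $\int\|u\|_{L^\infty}$ continuation criterion work, and it relies crucially on the order-$(-1)$ smoothing of $B$ encoded in $\pax B(u)=\Lambda^\beta v$ and the bounds of Lemma \ref{lemmaboundsv}. Second, and technically the heaviest, the analyticity step: estimating the nonlocal quadratic nonlinearity in the exponentially weighted norm and tracking precisely the competition between the weight-derivative operator $\Lambda^{1}$ (arising from $\tfrac{d}{dt}e^{\delta t\Lambda}$ and from the Gevrey commutator) and the dissipation $\Lambda^{\alpha}$, which is exactly what forces the threshold $\alpha\ge1$.
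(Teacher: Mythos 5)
Your argument is correct and is essentially the proof the paper has in mind: the paper itself omits the proof, stating only that, once the elliptic bounds of Lemmas \ref{lemmaboundsv} and \ref{lemmaboundsv2} are available, the result follows as in the cited references \cite{AGM,GO,BG,BG2}, which carry out exactly your scheme (regularization plus an $H^3$ energy estimate closed in terms of $\|u\|_{W^{1,\infty}}$ using the one-derivative smoothing $\pax B(u)=\Lambda^\beta v$, pointwise evaluation at extrema of $u$ and $\pax u$ for the continuation criterion, and Gevrey weights $e^{\delta t\Lambda}$ for analyticity when $\alpha\geq1$). The only blemishes are cosmetic, e.g.\ the index slip $\Lambda^{\beta+2}v$ versus $\Lambda^{\beta+3}v$ in your step (iii).
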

Once we have Lemmas \ref{lemmaboundsv} and \ref{lemmaboundsv2}, the proof of this result is similar to the proofs in \cite{AGM, GO, BG, BG2}. For brevity, we omit the standard proof.

\subsection{Auxiliary energy estimates}
In the next lemma we collect some bounds for Lebesgue and Sobolev norms of the solution.

\begin{lem}\label{Lemmanorms} Let $u_0\in H^3$ be a non-negative initial data. Take $s\in\RR^+$ and $\chi>r>0$ such that
$$
0<s\leq \frac{r}{\chi-r}.
$$
Given an initial data $u_0$ for equation \eqref{eq1}, \eqref{eqB} (or for \eqref{eq1},  \eqref{eqB2}) with \eqref{eqf}, let us define 
\begin{equation}\label{calN}
\mathcal{N}=\max\{\|u_0\|_{L^1},2\pi\}.
\end{equation}
Then, the non-negative solution $u(x,t)$ verifies for all times $0\leq t\leq T< T_{max}$ the following bounds
\begin{enumerate}
\item
$
\|u(t)\|_{L^1}\leq \mathcal{N}, 
$ \vskip 1mm
\item
$
\int_0^t\|u(s)\|^2_{L^2 }ds\leq \mathcal{N}t+2\mathcal{N},
$ \vskip 1mm
\item 
$
\|u(t) \|_{L^{s+1}} \le e^{r t}\| u_0 \|_{L^{s+1}},
$  \vskip 1mm
\item $
 \left(  r (s+1) - \chi s \right)  \int_0^t \|u(s) \|^{s+2}_{L^{s+2}}  ds \le e^{r(s+1) t} \| u_0 \|^{s+1}_{L^{s+1}},
$  \vskip 1mm

\item
$
\int_0^t\|u(s)\|_{\dot{W}^{\alpha/(2+2s)-\delta,1+s}}^{2+2s}\leq C(\alpha,s,\delta,\|u_0\|_{L^{1+s}},T)
$ \\
for arbitrary $0<\delta<\alpha/(2+2s)$,  \vskip 1mm
\item $
\int_0^t\|u(s)\|_{\dot{W}^{\alpha/2-\delta,1}}^2 ds\leq C(\alpha,\delta,\mathcal{N}, \|u_0\|_{L^{1+s}}, T)
$ \\
for arbitrary $0<\delta<\alpha/2$.
\end{enumerate}
\end{lem}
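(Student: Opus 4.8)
The plan is to establish the bounds one after another, each feeding into the next, by testing the equation \eqref{eq1} against appropriate powers of $u$ and exploiting the nonnegativity and the structure of $B(u) = \Lambda^{\beta-1}Hv$ via Lemmas \ref{lemmaboundsv}--\ref{lemmaboundsv2}. For (1), integrate \eqref{eq1} over $\TT$: the diffusion term integrates to zero (mean is conserved by $\Lambda^\alpha$), the transport term $\chi\pax(uB(u))$ is a perfect derivative and integrates to zero, and the logistic term gives $\frac{d}{dt}\|u\|_{L^1} = r\int_\TT u - ru^2 \le r\|u\|_{L^1} - \frac{r}{2\pi}\|u\|_{L^1}^2$ by Cauchy--Schwarz (using $|\TT| = 2\pi$). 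This is a logistic ODE inequality whose positive equilibrium is $2\pi$, so $\|u(t)\|_{L^1}$ is bounded by $\mathcal N = \max\{\|u_0\|_{L^1}, 2\pi\}$ for all time. For (2), return to $\frac{d}{dt}\|u\|_{L^1} + \frac{r}{2\pi}\|u\|_{L^2}^2 \le r\mathcal N$ (after bounding $\int u \le \mathcal N$), integrate in time and rearrange to get $\frac{r}{2\pi}\int_0^t\|u\|_{L^2}^2 \le \|u_0\|_{L^1} + r\mathcal N t \le \mathcal N + r\mathcal N t$; absorbing constants gives the stated form (the precise numerical constants will follow from carrying the $2\pi$ and $r$ through, and I expect the clean statement $\mathcal N t + 2\mathcal N$ to come out with a harmless adjustment).

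For (3) and (4), test \eqref{eq1} with $u^{s+1}$ (legitimate since $u \ge 0$). The diffusion term contributes $-(s+1)\int u^s \Lambda^\alpha u \le 0$ by the pointwise/integration-by-parts positivity inequality for the fractional Laplacian (C\'ordoba--C\'ordoba, cf. the appendix). The crucial term is the transport one: $\chi(s+1)\int u^{s+1}\pax(uB(u)) = \chi(s+1)\int u^{s+1}\pax u \, B(u) + \chi(s+1)\int u^{s+2}\pax B(u)$; integrating the first by parts folds it into the second, yielding (up to the combinatorial factor) $\frac{\chi(s+1)}{s+2}\int u^{s+2}\,\pax B(u) \cdot (\text{const})$, and since $\pax B(u) = \pax\Lambda^{\beta-1}Hv = \Lambda^\beta v$ (using $\pax H = \Lambda$), we can invoke $\Lambda^\beta v \le u$ from Lemma \ref{lemmaboundsv}(1). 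This replaces $\pax B(u)$ by $u$ and produces a term $\le \chi s \int u^{s+2}$ after bookkeeping. The logistic term gives $r(s+1)\int u^{s+1} - r(s+1)\int u^{s+2}$. Collecting, $\frac{d}{dt}\frac{1}{s+2}\|u\|_{L^{s+2}}^{s+2} \le r(s+1)\|u\|_{L^{s+1}}^{s+1}\cdot(\text{const}) + \big(\chi s - r(s+1)\big)\int u^{s+2}$, wait---more precisely $\frac{1}{s+2}\frac{d}{dt}\|u\|_{L^{s+2}}^{s+2} + \big(r(s+1)-\chi s\big)\|u\|_{L^{s+2}}^{s+2} \le r\|u\|_{L^{s+1}}^{s+1}$ (the $\int u^{s+1}$ on the right handled by $\int u^{s+1} \le \|u\|_{L^{s+1}}^{s+1}$); the hypothesis $s \le r/(\chi-r)$ is exactly what makes the coefficient $r(s+1)-\chi s \ge 0$. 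Dropping that nonnegative term and a Gr\"onwall argument with $\|u\|_{L^{s+1}}^{s+1}$ first bounded by a similar (one order lower) computation gives (3): $\|u(t)\|_{L^{s+1}} \le e^{rt}\|u_0\|_{L^{s+1}}$. Feeding (3) back and integrating the differential inequality in time yields (4).

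For (5) and (6), the input is the nonlinear maximal-regularity-type gain from the fractional dissipation: testing with $u^{s}$ (or $u$) and keeping the good term, one has $c\int_0^t\int_\TT u^{s}\Lambda^\alpha u \le (\text{RHS from (3),(4)})$, and by the Stroock--Varopoulos / C\'ordoba--C\'ordoba inequality $\int u^s \Lambda^\alpha u \gtrsim \|\Lambda^{\alpha/2}u^{(s+1)/2}\|_{L^2}^2 \gtrsim \|u^{(s+1)/2}\|_{\dot H^{\alpha/2}}^2$, which by Sobolev embedding $\dot H^{\alpha/2} \hookrightarrow \dot W^{\alpha/(2+2s)-\delta, \text{appropriate}}$ (after undoing the power, losing $\delta$) translates into control of $\int_0^t\|u\|_{\dot W^{\alpha/(2+2s)-\delta,1+s}}^{2+2s}$, giving (5); the case $s=1$ (really: testing with $u$ itself, exploiting $\int u\Lambda^\alpha u = \|\Lambda^{\alpha/2}u\|_{L^2}^2$ and the bound on $\int_0^t\|u\|_{L^2}^2$ from (2)) combined with the $L^1$ bound $\mathcal N$ and Sobolev embedding into $\dot W^{\alpha/2-\delta,1}$ gives (6). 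I expect the main obstacle to be item (4)---specifically, making the algebra of the transport term airtight: tracking the combinatorial constants through the two integrations by parts, correctly identifying $\pax B(u) = \Lambda^\beta v$, and applying $\Lambda^\beta v \le u$ with the right sign so that the surviving coefficient is precisely $r(s+1)-\chi s$, which must be nonnegative exactly under the stated hypothesis on $s$. The fractional-Laplacian embedding losses in (5)--(6) are standard but require care about which $\dot W^{\gamma,p}$ seminorm the appendix inequalities actually deliver.
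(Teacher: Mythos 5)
Your items (1)--(5) follow essentially the paper's route: integrate the equation for (1)--(2); test with a power of $u$, use $\pax B(u)=\Lambda^\beta v$ and $\Lambda^\beta v\le u$, and observe that $r(s+1)-\chi s\ge 0$ is exactly $s\le r/(\chi-r)$ for (3)--(4); use coercivity of $\int u^s\Lambda^\alpha u$ for (5). One indexing slip: to obtain the $L^{s+1}$ bound in (3) you must test with $u^{s}$, not $u^{s+1}$; your displayed inequality mixes the two (a time derivative of $\|u\|_{L^{s+2}}^{s+2}$ cannot Gr\"onwall into a bound on $\|u\|_{L^{s+1}}$), though the ``one order lower computation'' you invoke is precisely the correct one and yields $\frac{d}{dt}\|u\|_{L^{s+1}}^{s+1}+(r(s+1)-\chi s)\|u\|_{L^{s+2}}^{s+2}\le r(s+1)\|u\|_{L^{s+1}}^{s+1}$, from which (3) and (4) follow. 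For (5), the passage from $\|\Lambda^{\alpha/2}u^{(s+1)/2}\|_{L^2}^2$ back to the $\dot W^{\alpha/(2+2s)-\delta,1+s}$ seminorm of $u$ itself is the nontrivial step; the paper's Lemma \ref{lemaentropy2} delivers the needed inequality $\|u\|_{\dot{W}^{\alpha/(2+2s)-\delta,1+s}}^{2+2s}\leq C\|u\|_{L^{1+s}}^{1+s}\int u^s\Lambda^\alpha u$ directly, so you should invoke it rather than argue via a Sobolev embedding of the power.

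The genuine gap is item (6). You propose to test with $u$ and use $\int u\Lambda^\alpha u=\|\Lambda^{\alpha/2}u\|_{L^2}^2$. But testing with $u$ produces the transport contribution $\frac{\chi}{2}\int_\TT u^2\Lambda^\beta v\le\frac{\chi}{2}\int_\TT u^3$, against which the only available damping is $-r\int_\TT u^3$ from the logistic term; the resulting coefficient $r-\chi/2$ is negative precisely in the regime $r<\chi/2$ where the lemma is most needed (small $s$), and none of the bounds (1)--(5) controls $\int_0^t\int_\TT u^3$ there. This is exactly why the $L^2(0,T;H^{\alpha/2})$ estimate appears in Theorem \ref{th:globalweak} only in case (ii), $r\ge\chi/2$. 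The paper instead differentiates the entropy $\mathcal F=\int_\TT u\log u-u+1\,dx$: testing with $\log u$ turns the transport term into $\chi\int_\TT u\Lambda^\beta v\le\chi\|u\|_{L^2}^2$, which is integrable in time by item (2) with no condition on $r$ versus $\chi$, and the entropy dissipation $\int_0^t\int_\TT\Lambda^\alpha u\,\log u$ controls $\int_0^t\|u\|_{\dot{W}^{\alpha/2-\delta,1}}^2$ via Lemma \ref{lemaentropy} (case $p=1$), weighted only by $\|u\|_{L^1}\le\mathcal N$. You need this entropy argument (or an equivalent substitute) to obtain (6) in full generality.
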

\begin{proof}

\textbf{Part (1) and (2).} The proof of this result is analogous to the proof in \cite[Lemmas 1, 2]{BG2}. 

\textbf{Part (3).} Testing \eqref{eq1} with $u^s$ we get after integrations by parts
\begin{equation}\label{eq:ener}
\frac{1}{s+1} \frac{d}{dt} \int_\T u^{s+1}dx +     \int_\T u^s  \Lambda^\alpha u \, dx  = \int_\T  \frac{\chi s}{s+1}  (u^{s+1})  \Lambda^\beta v  +ru^{s+1} -ru^{s+2}dx.
\end{equation}
Lemma \ref{lemaentropy2} implies
$$
0\leq  \int_\TT u^s(x)\Lambda^{\alpha}u(x)dx.
$$
This and $\Lambda^\beta v \le u$ (valid due to definition  \eqref{eqv} and Lemma  \ref{lemmaboundsv}  or due to definition  \eqref{eqv2}) gives

\begin{equation}\label{eq:gr2}
 \frac{d}{dt} \int_\T u^{s+1}  + \left( r (s+1) - \chi s \right)  \int_\T u^{s+2} \le r (s+1) \int_\T  u^{s+1}. 
\end{equation}
Therefore, for $s$ such that $(\chi - r)s \le r$ we get
\begin{equation}\label{eq:gr}
\|u(t) \|_{L^{s+1}} \le e^{r t}\| u_0 \|_{L^{s+1}}.
\end{equation}

\textbf{Part (4)} Integrating \eqref{eq:gr2} in time we obtain that for any $t \in [0, T]$,
\[
 \left(  r (s+1) - \chi s \right)  \int_0^t \int_\T u^{s+2} (x, s) dx ds \le [r(s+1) e^{r(s+1)t } +1] \int_\T u_0^{s+1} (x) dx,
 \]
where the latter comes from \eqref{eq:gr}. 

\textbf{Part (5)} Integrating \eqref{eq:ener} in time and using \eqref{eq:gr} together with Lemma \ref{lemaentropy2}, we obtain
$$
\int_0^T\|u\|_{\dot{W}^{\alpha/(2+2s)-\delta,1+s}}^{2+2s}\leq C(\alpha,s,\delta,u_0,T).
$$

\textbf{Part (6).} We define the functional
$$
\mathcal{F}=\int_\TT u\log(u)-u +1 dx.
$$

We have
\begin{align*}
\frac{d}{dt}\mathcal{F}&=\int_\TT \pat u\log(u)dx\\
&\leq-\int_\TT \Lambda^\alpha u\log(u) dx+\chi\int_\TT \Lambda^\beta v u dx\\
&\leq -\int_\TT \Lambda^\alpha u\log(u) dx+\chi\|u\|_{L^2}^2.
\end{align*}
Consequently, we obtain
$$
\mathcal{F}(u(t))+\int_0^t\int_\TT \Lambda^\alpha u\log(u) dx\leq \mathcal{F}(u(0))+\mathcal{N}t+2\mathcal{N}.
$$
Using
$$
\mathcal{F}(u(0))\leq \int_{\{u(x,0)\geq1\}}u(x,0)^{1+s}dx +2 \pi\leq \|u(0)\|_{L^{1+s}}+2\pi,
$$
\begin{equation}\label{uniformboundentropy}
\mathcal{F}(u(t))+\int_0^t\int_\TT \Lambda^\alpha (u+1)\log(u+1) \leq \|u_0\|_{L^{1+s}}+2\pi+ 2 \chi \mathcal{N} (T+2).
\end{equation}
We conclude by Lemma \ref{lemaentropy}.
\end{proof}
\section{Proof of Theorem \ref{th:globalweak}}\label{sec:5}
For the sake of brevity, we are going to prove the case $B$ defined in \eqref{eqB}. The remaining case $B$ given by \eqref{eqB2} requires only minor modifications. We only consider the case $\chi > r$, being the other case much easier.

\subsection{Approximate problems}
Let $T>0$ be a fixed parameter. For $\epsilon \in (0,1)$ we define the following \emph{approximate} problem 
\begin{equation}\label{eqapprox}
\pat u_\epsilon=-\epsilon\Lambda ^{1.75} u_\epsilon-\Lambda ^\alpha u_\epsilon+ \chi \pax(u_\epsilon B(u_\epsilon))+f(u_\epsilon),
\end{equation}
where $B$ is given by \eqref{eqB} (or by \eqref{eqB2}) and $f$ is given by \eqref{eqf}. The function $u$ verifies the initial condition
$$
u_\epsilon(0,x)=\jeps u_0(x)\geq0;
$$
where $\jeps$ is the heat kernel at time $\epsilon$. Along the lines of Lemma \ref{localexistence} we have local existence of solution. Let us define
$$
u_\epsilon(x_t,t)=\max_{x}u_\epsilon(x,t),
$$
and
$$
r_0=\frac{2\mathcal{N}}{u_\epsilon(x_t)}.
$$
We have two possibilities, namely $r_0\geq\pi$ or $r_0\leq \pi$. Assume first that $r_0\geq\pi$. Then we have the bound
\begin{equation}\label{eqclear}
\frac{2\mathcal{N}}{\pi}\geq \|u_\epsilon(t)\|_{L^\infty}.
\end{equation}
On the other hand, if $r_0\leq \pi$, we can apply Lemma \ref{lemaaux3} with $p=1, \gamma_1 = \mathcal{N}$ and we get
$$
\Lambda ^{1.75} u_\epsilon(x_t)\geq C(\mathcal{N})\|u_\epsilon(t)\|_{L^\infty}^{2.75}.
$$
Using pointwise methods as in \cite{AGM,BG,GO}, we have
$$
\frac{d}{dt}\|u_\epsilon(t)\|_{L^\infty}\leq -\epsilon C(\mathcal{N})\|u_\epsilon(t)\|_{L^\infty}^{2.75}+\chi\|u_\epsilon(t)\|_{L^\infty}^2+r\|u_\epsilon(t)\|_{L^\infty}(1-\|u_\epsilon(t)\|_{L^\infty}).
$$
As a consequence, we obtain a uniform-in-time bound
\begin{equation}\label{eqclear2}
\|u_\epsilon(t)\|_{L^\infty}\leq C(\epsilon,\chi,r,\mathcal{N}).
\end{equation}
Collecting both inequalities \eqref{eqclear} and \eqref{eqclear2}, we have the bound
$$
\|u_\epsilon(t)\|_{L^\infty}\leq C(\epsilon,\chi,r,\mathcal{N})+\frac{2\mathcal{N}}{\pi},
$$
regardless of the value of $r_0$.

The continuation criterion implies that $u_\epsilon$ is a global-in-time, smooth solution (recall that $T$ is arbitrary, but fixed).

\subsection{Uniform estimates}

\textbf{Part 1. } We will use in what follows thesis of Lemma \ref{Lemmanorms} for $u_\epsilon$ in place of $u$, since it holds for  $u_\epsilon$ by the same token as for $u$.

Let us consider first the case $r \geq \chi/2$. Using this condition, we take 
$$
s=1\leq\frac{r}{\chi-r}.
$$

Lemma \ref{Lemmanorms} gives the following uniform bounds for all $t\leq T$:
\begin{equation}\label{equnif1}
\sup_{0\leq t\leq T}\|u_\epsilon(t)\|_{L^{1}}\leq \mathcal{N},
\end{equation}

\begin{equation}\label{equnif2}
\int_0^t\|u_\epsilon(s)\|_{\dot{W}^{\alpha/2-\delta,1}}^2 ds\leq C(\alpha,\delta,\mathcal{N}, \|u_0\|_{L^{2}}, T),
\end{equation}
for arbitrary $0<\delta<\alpha/2$.

\begin{equation}\label{equnif3}
\sup_{0\leq t\leq T}\|u_\epsilon(t)\|_{L^{1+s}}\leq C(r,T,\|u_0\|_{L^{1+s}}),
\end{equation}

\begin{equation}\label{equnif4}
\int_0^t\|u_\epsilon(s)\|^{2}_{L^{2}}ds \leq C(r,T,\|u_0\|_{L^{1}},\chi).
\end{equation}

\begin{equation}\label{equnif41}
 \left(  r (s+1) - \chi s \right)  \int_0^t \|u(s) \|^{s+2}_{L^{s+2}}  ds \le C(r,T,\|u_0\|_{L^{1+s}}),
\end{equation}

\begin{equation}\label{equnif42}
\int_0^t\|u(s)\|_{\dot{W}^{\alpha/(2+2s)-\delta,1+s}}^{2+2s}\leq C(\alpha,s,\delta,\|u_0\|_{L^{1+s}},T)
\end{equation}
for arbitrary $0<\delta<\alpha/(2+2s)$,  \vskip 1mm

Furthermore, by \eqref{eq:ener} and following the proof of Lemma \ref{Lemmanorms}, we also have
\begin{equation}\label{equnif4a}
\int_0^t\|u_\epsilon (s)\|_{\dot{H}^{\alpha/2}}^2ds\leq C(r,T,\|u_0\|_{L^{2}},\chi).
\end{equation}

\textbf{Part 2.} Now, let us consider the case $ r < \chi/2$. Using Lemma \ref{Lemmanorms} and taking 
\begin{equation}\label{equnif4ch}
s=\frac{r}{\chi-r} \quad (<1),
\end{equation}
the uniform bounds \eqref{equnif1}-\eqref{equnif4} hold. We do not have \eqref{equnif4a}, but Lemma \ref{lemaentropy2} and \eqref{equnif3} imply
$$
\int_0^t\|u_\epsilon(s)\|_{\dot{W}^{\alpha/(2+2s)-\delta,1+s}}^{2+2s} ds \leq C(\alpha,s,\delta,\|u_0\|_{L^{1+s}},T)
$$
for arbitrary $0<\delta<\alpha/(2+2s)$.

\subsection{Convergence}
Due to the uniform bound \eqref{equnif4} we have
$$
\pat u_\epsilon\in L^2(0,T;H^{-2})
$$
$\epsilon$-uniformly bounded.

\textbf{Part 1.} Now we first consider the case $r\geq \chi/2$. We take
$$
X_{1}=H^{-2},\,X=L^2,\,X_0=H^{\alpha/2},
$$
Then, due to the Aubin-Lions Theorem, we have that
$$
Y=\{h: \, h\in L^2(0,T;X_0), \; \pat h \in L^2(0,T;X_1)\}
$$
is compactly embedded in 
$$
L^2(0,T;X).
$$
Hence we can extract a subsequence (denoted again by $u_\epsilon$) such that
$$
u_\epsilon\rightharpoonup u \text{ in }L^2(0,T;H^{\alpha/2}),
$$
$$
u_\epsilon\rightarrow u \text{ in }L^2(0,T;L^2).
$$
This strong compactness is enough to pass to the limit in the weak formulation. As a consequence, $u$ is a global weak solution (in the sense of Definition \ref{def:weaksol}) of the original system \eqref{eq1}, \eqref{eqB} (or \eqref{eq1}, \eqref{eqB2}) with \eqref{eqf}. Its regularity follows from l.w.s.c. of norms and uniform bounds \eqref{equnif2} - \eqref{equnif4a}.

\textbf{Part 2.} Now we explain how to adapt the proof for the second case  $r < \chi/2$. 
Let us first use the assumption $\alpha  >1 - s$. It implies
$$
\frac{1}{2}>\frac{1}{1+s}\left(1-\frac{\alpha}{2}\right),
$$
consequently, for $\delta>0$ sufficiently small, 
$$
W^{\alpha/(2+2s)-\delta,1+s}\subset\subset L^2.
$$
We take
$$
X_{1}=H^{-2},\,X=L^2,\,X_0=W^{\alpha/(2+2s)-\delta,1+s},
$$
and we apply Aubin-Lions's Theorem again. The rest of the proof remains unchanged.

Next, let us focus on the assumption  $ \alpha > 2 \frac{1-s-s^2}{s+2}$, where, abusing notation, we have redefined
$$
0<s<\frac{r}{\chi-r}.
$$ It implies
\[W^{\alpha/(2+2s)-\delta,1+s}\subset\subset L^\frac{s+2}{s+1}.\]
We choose
$$
X_{1}=H^{-2},\,X=L^\frac{s+2}{s+1},\,X_0=W^{\alpha/(2+2s)-\delta,1+s},
$$
the Aubin-Lions Theorem implies now that one can extract a subsequence (denoted again by $u_\epsilon$) such that
$$
u_\epsilon\rightarrow u \text{ in }L^{2+2s}(0,T;L^\frac{s+2}{s+1}).
$$
For the limit passage in the most troublesome part $-r u^2_\epsilon $ we want to use additionally (4) of Lemma \ref{Lemmanorms} for $u_\epsilon$, i.e.
\[ \left(  r (s+1) - \chi s \right)  \int_0^t \|u_\epsilon(s) \|^{s+2}_{L^{s+2}}  ds \le e^{r(s+1) t} \| u_0 \|^{s+1}_{L^{s+1}}. \]
For its left-hand-side to be meaningful, we need to choose now in 
\eqref{equnif4ch}
the sharp inequality. Consequently, we have
\[r \int_0^T \int_\TT  | (u^2_\epsilon - u^2) \phi| \le r |\phi|_{L^\infty_t L_x^\infty}  \| u_\epsilon - u\|_{L^2_tL^\frac{s+2}{s+1}_x}  \| u_\epsilon +u\|_{L^2_tL_x^{s+2}}   \]
and the right-hand-side vanishes as $\epsilon \to 0$ thanks to strong convergence in $L^2(0,T;L^\frac{s+2}{s+1})$ and boundedness in $L^{s+2}( 0, T; L^{s+2})$.

\section{Proof of Theorem \ref{th:globalstrong}}\label{sec:6}
For the sake of brevity, we are going to prove the case of $B$ defined by \eqref{eqB}. For the other one ($B$ given by \eqref{eqB2}), the proof can be easily adapted. 

For $\alpha=2$,  the proof follows from standard energy methods and Sobolev inequality. Hence, let us focus on the case $2>\alpha$. First, let us fix an arbitrary $T < \infty$. The proof is based on obtaining the bound
$$
\sup_{0\leq t\leq T}\|u(t)\|_{L^\infty}\leq C(\alpha,r,\chi,u_0, T),
$$
where the constant $C(\alpha,r,\chi,u_0, T)$ is finite for finite $T$.

A computation gives (see \cite{cordoba2003pointwise, cor2, constantin2013long, constantin2012nonlinear})
\begin{equation}\label{eqpointwise}
f \Lambda^\alpha f = \frac{1}{2} \Lambda^\alpha (f^2) + \frac{1}{2}  I (f),
\end{equation}
with 
\[
I (f)  := c_{\alpha}\;\text{P.V.} \int_{\mathbb{T}} \frac{(f(x)-f(y))^2}{|x-y|^{1+\alpha}}dy+c_{\alpha}\sum_{k\in\mathbb{Z}\setminus\{0\}} \int_{\mathbb{T}} \frac{(f(x)-f(y))^2}{|x-y+2k\pi|^{1+\alpha}}dy.
\]

Let us multiply \eqref{eq1} by $u$ and define $v$ as in \eqref{eqv}. We have that
$$
\frac{1}{2}\frac{d}{dt} u^{2}+u\Lambda^\alpha u=ru^2(1-u)+\chi\left(u^2\Lambda^\beta v+\frac{1}{2}\pax(u^2)\Lambda^{\beta-1}Hv\right).
$$

Now we use identity \eqref{eqpointwise} for the dissipative term. We obtain
\begin{equation}\label{eq:ener2}
\frac{d}{dt} u^{2}+\Lambda^\alpha (u^2)+I(u)= 2ru^2(1-u)+\chi\left(2u^2\Lambda^\beta v+\pax(u^2)\Lambda^{\beta-1}Hv\right).
\end{equation}
We intend to control the term $2\chi u^2\Lambda^\beta v$ with $I (u)$. The rest on the right-hand side will turn out to be harmless. We obtain this control in a few steps.
\subsection{Auxiliary inequality for primitive function}\label{sec:auxprimi}
Let us define
\[
U (x) = \int^x_{- \pi} u (z) dz,
\]
where we suppressed the time dependence and
$$
0<s \le \frac{r}{\chi - r}.
$$ 
We have
\[
|U (x) - U (x-y) | = \left| \int^x_{x-y} u (z) dz \right| \le \| u \|_{L^{s+1}} |y|^\frac{s}{s+1}.
\]
Hence, using (3) of Lemma \ref{Lemmanorms}, we have for any $x \in \T$ and $ 0 \neq y \in \T$
\begin{equation}\label{eq:prim}
\frac{|U (x, t) - U (x-y, t) |}{ |y|^\frac{s}{2(s+1)}} \le \| u(t) \|_{L^{s+1}} |y|^\frac{s}{2(s+1)} \le e^{rt}\| u_0 \|_{L^{s+1}}   |y|^\frac{s}{2(s+1)}.
\end{equation}
Notice that the previous equation is a particular case of the splitting
$$
\frac{s}{s+1}=\left(\frac{1}{1+\delta}+\frac{\delta}{1+\delta}\right)\frac{s}{s+1}.
$$
We use this particular split $\delta=1$, because it reduces the number of parameters present in the proof and consequently makes our argument more traceable. Notice however, that the value of $\delta$ may be in fact arbitrary with no consequences to other parameters. This observation will be used in the last step  to generalize the proof.
\subsection{Towards  control of  $2\chi u^2\Lambda^\beta v$  with $I (u)$ in \eqref{eq:ener2}.}
Let us compute for a $R \le 1$ 
\begin{align*}
c^{-1}_{\alpha} I (u) &\ge \;\text{P.V.} \int_{\mathbb{T}} \frac{(u(x)-u(x-y))^2}{|y|^{1+\alpha}}dy \\
&\ge  \int_{\pi >|y| > R} \frac{(u(x)-u(x-y))^2}{|y|^{1+\alpha}}dy\\
&\ge  2u^2(x) \int_{R}^\pi \frac{1}{y^{1+\alpha}}dy -2  u(x) \int_{\pi > |y| > R} \frac{u(x-y)}{|y|^{1+\alpha}}dy
\end{align*}
It reads
\begin{equation}\label{eq:I}
c^{-1}_{\alpha} I (u) \ge \frac{2}{\alpha } (R^{-\alpha} - \pi^{-\alpha} )u^2(x)  - 2  u(x) (I_1 + I_2).
\end{equation}
with 
\[
I_1 := \int_{L \ge  |y| > R} \frac{u(x-y)}{|y|^{1+\alpha}}dy, \quad I_2 := \int_{\pi > |y| \ge L} \frac{u(x-y)}{|y|^{1+\alpha}}dy,
\]
where $L \ge R$ is to be chosen later. Let us examine $I_1, I_2$. Using the identity 
$$
u(x-y) = \partial_y (U (x) -U (x-y)),
$$ 
we see that 
\begin{align*}
I_1&= (1 + \alpha)  \int_{L \ge  |y| > R} \frac{U (x, t) - U(x-y)}{|y|^{2+\alpha}} \frac{y}{|y|} dy\\
&\quad+ \frac{U (x, t) -U(x-y)}{|y|^{1+\alpha}} \bigg|^L_R + \frac{U (x, t) -U(x-y)}{|y|^{1+\alpha}} \bigg|^{-R}_{-L}.
\end{align*}
Consequently by \eqref{eq:prim} we get
\begin{align*}
I_1 &\le (1 + \alpha)   \int_{L \ge  |y| > R} \frac{  e^{rt}\| u_0 \|_{L^{s+1}}   |y|^\frac{s}{2(s+1)} }{|y|^{2+\alpha- \frac{s}{2(s+1)}}} dy\\
&\quad + 2 \frac{  e^{rt}\| u_0 \|_{L^{s+1}}   R^\frac{s}{2(s+1)}  }{R^{1+\alpha- \frac{s}{2(s+1)}}} + 2   \frac{  e^{rt}\| u_0 \|_{L^{s+1}}   L^\frac{s}{2(s+1)}  }{L^{1+\alpha- \frac{s}{2(s+1)}}}.
\end{align*}
Let us denote
$$
D :=   e^{rt}\| u_0 \|_{L^{s+1}}   L^\frac{s}{2(s+1)}.
$$
Via $D\ge e^{rt}\| u_0 \|_{L^{s+1}}   R^\frac{s}{2(s+1)}$ we arrive at
 \[
I_1  \le   (1 + \alpha) 2   \int^L_R \frac{ D}{|y|^{2+\alpha- \frac{s}{2(s+1)}}} dy + 2 \frac{  D }{R^{1+\alpha- \frac{s}{2(s+1)}}} + 2   \frac{ D}{L^{1+\alpha- \frac{s}{2(s+1)}}}
 \]
Hence we can estimate
  \[
I_1  \le  \frac{ C_0(\alpha, s) D }{R^{1+\alpha- \frac{s}{2(s+1)}}} +   \frac{C_0(\alpha, s)  D}{L^{1+\alpha- \frac{s}{2(s+1)}}},
 \]
with
 \[
C_0(\alpha, s) =  2 + \frac{2(1 + \alpha)}{2+\alpha- \frac{s}{2(s+1)}}  .
\]

For $I_2$ we use Lemma \ref{Lemmanorms} to get
\[
I_2  \le L^{-(1+\alpha)} \|u(t)\|_{L^1}\leq L^{-(1+\alpha)}\mathcal{N}.
\]
Estimates for  $I_1, I_2$ give in \eqref{eq:I}
\begin{align}\label{eq:I2}
c^{-1}_{\alpha} I (u) &\ge \frac{2}{\alpha } (R^{-\alpha} - \pi^{-\alpha} )u^2(x)  - 2  u(x) (I_1+I_2)\nonumber\\
&\ge \frac{2}{\alpha } (R^{-\alpha} - \pi^{-\alpha} )u^2(x)-2u(x)L^{-(1+\alpha)}\mathcal{N}\nonumber \\
&\quad - 2 C_0(\alpha, s) u(x) \left( \frac{  D }{R^{1+\alpha- \frac{s}{2(s+1)}}} +   \frac{ D}{L^{1+\alpha- \frac{s}{2(s+1)}}}\right).
\end{align}
Let us introduce two parameters $\gamma, m>0$ to be fixed later. Also, let us now choose $R$ and $L$ according to
\begin{equation}\label{eq:choi}
\begin{aligned}
R &:= \left(\frac{2}{\gamma\alpha} \frac{D}{u(x)+ m}\right)^{1/\alpha} \\
 D&=e^{rt}\| u_0 \|_{L^{s+1}}   L^\frac{s}{2(s+1)} := \min  \left\{e^{rt} \|u_0\|_{L^{s+1}} , c_{\alpha}\right\}.
\end{aligned}
\end{equation}
Hence
$$
R^{-\alpha}=  \frac{\gamma \alpha}{2D } (u(x)+ m)  \ge  \frac{\gamma \alpha}{2D } m
$$
and
$$
D \le   c_{\alpha}.
$$
Recall that we need $L \ge R$. The formula \eqref{eq:choi} for $D$ fixes the value for $L$, so we need to choose the appropriate value of $m$ such that $L \ge R$. 

In the case when $ \min \{e^{rt} \|u_0\|_{s+1} , c_{\alpha}\} = e^{rt} \|u_0\|_{s+1} $, $L =1$ and $R$ should verify
\begin{equation}\label{Rcondition1}
R\leq \left(\frac{2}{\gamma\alpha} \frac{D}{m}\right)^{1/\alpha}    \leq \left(\frac{2}{\gamma\alpha} \frac{  c_{\alpha}}{m}\right)^{1/\alpha}   \leq 1=L. 
\end{equation}
On the other hand, in the case of $ \min \{e^{rt} \|u_0\|_{s+1} , c_{\alpha}\} =   c_{\alpha}$, $R$ should verify
\begin{equation}\label{Rcondition2}
 L = \left( c_{\alpha} e^{-rt} \|u_0\|^{-1}_{L^{s+1}} \right)^\frac{2(s+1)}{s} \ge  \left( \frac{2}{\gamma\alpha} \frac{ c_{\alpha}}{ m} \right)^{1/\alpha}  \geq R.
\end{equation}
For equations \eqref{Rcondition1} and \eqref{Rcondition2} to hold, we take 
$$
m=  \frac{2c_\alpha}{\alpha\gamma} \max\left\{1, \left( c_{\alpha} e^{-rt} \|u_0\|^{-1}_{L^{s+1}} \right)^{-\frac{2\alpha(s+1)}{s}} \right\}. 
$$ 

The only parameter which is still free is $\gamma.$ Inserting our choice of $R$ \eqref{eq:choi} in inequality \eqref{eq:I2}, we obtain
\begin{align*}
c^{-1}_{\alpha} I (u) &\ge \frac{\gamma}{D } u^3(x)  -  \frac{2}{\alpha }\pi^{-\alpha} u^2(x)-2u(x) L^{-(1+\alpha)} \mathcal{N}   \\
&\quad- 2  u(x) C_0(\alpha, s)   \left( \frac{  D }{\left(\frac{2}{\gamma\alpha} \frac{D}{u(x)+ m}\right)^{(1+\alpha- \frac{s}{2(s+1)})/\alpha}} +   \frac{ D}{L^{1+\alpha- \frac{s}{2(s+1)}}}\right).
\end{align*}
Using $$
L^{-1}\leq \frac{e^{r\frac{2(s+1)}{s}t}}{\left( c_{\alpha} \|u_0\|^{-1}_{L^{s+1}} \right)^\frac{2(s+1)}{s}},
$$ we arrive at
\begin{align*}
 I (u) &\ge \gamma u^3(x)  -  C_1(\alpha) u^2(x)-C_2(\alpha,u_0,s)e^{(1+\alpha)r\frac{2(s+1)}{s}t} u(x)   \\
&\quad-   u(x)   \left( \frac{ 2c_\alpha C_0(\alpha, s)  D }{\left(\frac{2}{\gamma\alpha} \frac{D}{u(x)+ m}\right)^{(1+\alpha- \frac{s}{2(s+1)})/\alpha}} +    C_3(\alpha,u_0,s)e^{\left(1+\alpha- \frac{s}{2(s+1)}\right)r\frac{2(s+1)}{s}t}\right),
\end{align*}
where 
$$
C_1(\alpha)=\frac{2c_\alpha}{\alpha }\pi^{-\alpha},
$$
$$
C_2(\alpha,u_0,s)=\frac{2c_\alpha }{\left( c_{\alpha} \|u_0\|^{-1}_{L^{s+1}} \right)^{(1+\alpha)\frac{2(s+1)}{s}}} \mathcal{N} ,
$$
$$
C_3(\alpha,u_0,s)=\frac{2c_\alpha^2 C_0(\alpha,s)}{\left( c_{\alpha} \|u_0\|^{-1}_{L^{s+1}} \right)^{(1+\alpha- \frac{s}{2(s+1)})\frac{2(s+1)}{s}}}.
$$
Notice that for 
$$
1 - \alpha < \frac{s}{2(s+1)}
$$ 
the exponent ${(1+\alpha- \frac{s}{2(s+1)})/\alpha}=: \sigma <3$, so 
we arrive at
\begin{equation}\label{eq:Iq}
 I (u) \ge  \gamma u^3(x)  -  \tilde{C}(m, D, \alpha, s,  u_0) (u^{\sigma}(x) +1)-\tilde{c}(\alpha,u_0,s)e^{\bar{c}(r,s,\alpha)t}u(x)
\end{equation}
with 
$$
\sigma<3.
$$

\subsection{Control of $2\chi u^2\Lambda^\beta v$  with $I (u)$ in \eqref{eq:ener2}.}
Let us use \eqref{eq:Iq} in \eqref{eq:ener2}. Hence, we obtain for $\rho = u^2$
\begin{align}\label{eq:ener2qb}
\pat \rho+\Lambda^\alpha \rho+(\gamma +2r)  \rho^\frac{3}{2}&\leq 2r\rho+\chi\left(2\rho \Lambda^\beta v+\pax \rho \Lambda^{\beta-1}Hv\right)\nonumber\\
&\quad+\tilde{C}(m, D, \alpha, s,  u_0) (\rho^{\sigma/2}(x) +1)\nonumber\\
&\quad+\tilde{c}(\alpha,u_0,s)e^{\bar{c}(r,s,\alpha)t}\sqrt{\rho}.
\end{align}
Since Lemma \ref{lemmaboundsv} (part 1) and Definition \eqref{eqvB2} together with the positivity of $u$, we have
\begin{equation}\label{eq:pos}
\Lambda^\beta v\leq u,
\end{equation}
for both operators $B(u)$ considered.

As a consequence, equation \eqref{eq:ener2qb} can be estimated as
\begin{align}\label{eq:ener2q}
\pat \rho+\Lambda^\alpha \rho+(\gamma +2r - 2\chi)  \rho^\frac{3}{2}&\leq 2r\rho+\chi\pax \rho \Lambda^{\beta-1}Hv\nonumber\\
&\quad+\tilde{C}(m, D, \alpha, s,  u_0) (\rho^{\sigma/2}(x) +1)\nonumber\\
&\quad+\tilde{c}(\alpha,u_0,s)e^{\bar{c}(r,s,\alpha)t}\sqrt{\rho}.
\end{align}
We are free to choose the value of $\gamma$. Let us take $\gamma=2\chi$. Now, defining
$$
\bar{\rho} (t) = \max_{x \in \T}  \rho (x, t)=\rho(x_t,t)
$$
we have
\begin{align*}
\frac{d}{dt} \bar{\rho} (t)+\Lambda^\alpha \rho(x_t)+ 2r \bar{\rho} ^\frac{3}{2}&\leq 2r\bar{\rho}+\tilde{C}(m, D, \alpha, s,  u_0) (\bar{\rho}^{\sigma/2}(x) +1)\\
&\quad+\tilde{c}(\alpha,u_0,s)e^{\bar{c}(r,s,\alpha)t}\sqrt{\bar{\rho}}.
\end{align*}
Thanks to $r>0$ and $\sigma<3$, we get
$$
\tilde{C}(m, D, \alpha, s,  u_0) (\bar{\rho}^{\sigma/2}(x) +1)-2r\bar{\rho}^{3/2}\leq \bar{C}(m, D, \alpha, s,  u_0).
$$
Therefore
\[
\frac{d}{dt} \bar{\rho}+\Lambda^\alpha \rho(x_t)+ \leq 2r\bar{\rho}+\bar{C}(m, D, \alpha, s,  u_0) 
+\tilde{c}(\alpha,u_0,s)e^{\bar{c}(r,s,\alpha)t}\sqrt{\bar{\rho}},
\]
which implies that
$$
\sup_{0\leq t\leq T}\|u(t)\|_{L^\infty}\leq C(r,s,\alpha,u_0,T,m,D)<\infty.
$$
Notice that $T$ was fixed at the beginning of the proof but it can be taken arbitrarily large.
\subsection{Recovering the range of admissible $\alpha$'s}
Observe that the required assumptions were 
\begin{itemize}
\item $
s \le \frac{r}{\chi - r}
$
(for the $L^{1+s}$ estimate in Lemma \ref{Lemmanorms}) and 
\item
$
1 - \alpha < \frac{s}{2(s+1)}
$
(for  \eqref{eq:Iq}).
\end{itemize}
In particular, we can take $s=r/(\chi-r)$ and we get the global existence of solution in the range
\begin{equation}\label{range}
\alpha\in \left(1- \frac{1}{2} \frac{r}{\chi},2\right).
\end{equation}

\subsection{Generalizing the range of admissible $\alpha$'s}
Observe that the expression 
$$
\frac{1}{2}\frac{r}{\chi}
$$
in \eqref{range} originated in Section \ref{sec:auxprimi}. In particular, we can change \eqref{eq:prim} as follows
\begin{equation}\label{eq:prim2}
\frac{|U (x, t) - U (x-y, t) |}{ |y|^{\frac{1}{1+\delta}\frac{s}{(s+1)}}} \le \| u(t) \|_{L^{s+1}} |y|^{\delta/(1+\delta)} \le e^{rt}\| u_0 \|_{L^{s+1}}   |y|^{\delta/(1+\delta)}
\end{equation}
for an arbitrary $\delta>0$. By doing so, we arrive at the range
\begin{equation}\label{rangefull}
\alpha\in \left(1- \frac{1}{1+\delta}\frac{r}{\chi},2\right).
\end{equation}
Since we can take $0<\delta\ll1$, we recover the range as in the assumption. \qed
\section{Proof of Proposition \ref{th:largetime}}\label{sec:7}
Define
$$
\bar{u}(t)=\max_{x}u(x,t)=u(x_t,t).
$$
Then we have
$$
\frac{d}{dt}\bar{u}+\Lambda^\alpha u(x_t)=\chi\bar{u}\Lambda^\beta v(x_t)+r\bar{u}(1-\bar{u}).
$$
Since \eqref{eq:pos} holds for $v$ defined in \eqref{eqvB} and \eqref{eqvB2}, we obtain
$$
\frac{d}{dt}\bar{u}+\Lambda^\alpha u(x_t)\leq \chi\bar{u}^2+r\bar{u}(1-\bar{u}).
$$
Let
$$
r_0=\frac{2\mathcal{N}}{u (x_t)}.
$$
We have two possibilities, namely $r_0\geq\pi$ or $r_0\leq \pi$. In the former case  we have the bound
$$
\frac{2\mathcal{N}}{\pi}\geq \|u (t)\|_{L^\infty}.
$$
On the other hand, if $r_0 \leq \pi$, we can apply Lemma \ref{lemaaux3}  with $p=1, \gamma_1 = \mathcal{N}$  and get
$$
\Lambda^\alpha u(x_t)\geq \frac{c_\alpha}{2^{\alpha}}\frac{\bar{u}^{1+\alpha }}{\mathcal{N}^{\alpha }}.
$$
Thus for $\frac{2\mathcal{N}}{\pi} \leq \|u (t)\|_{L^\infty}$ holds
$$
\frac{d}{dt}\bar{u}\leq \chi\bar{u}^2+r\bar{u}(1-\bar{u})-\frac{c_\alpha}{2^{\alpha}}\frac{\bar{u}^{1+\alpha }}{\mathcal{N}^{\alpha }}.
$$
This is a differential inequality of type
\begin{equation}\label{odiP}
\frac{d}{dt} X (t) \leq A X(t) + B X^2 (t) - C X^{1 + \alpha} (t)
\end{equation}
Let us focus on the case $\alpha>1$. We use a blowup argument to obtain the global bound. More precisely, let us denote by $s_0$ the lower bound of values $s$ for which $A s + B s^2 - C s^{1 + \alpha} < -1$. 
Consider the case when $X(0) <  \max(s_0, \frac{2\mathcal{N}}{\pi} )$. Assuming that there exists first time $t_0>0$ such that  $X(0) = \max(s_0, \frac{2\mathcal{N}}{\pi} )$, we obtain from \eqref{odiP} that $\frac{d}{dt} X (t) \leq -1$, which contradicts the fact that $t_0$ is the first time of equality. As a consequence, $X(t) < \max(s_0, \frac{2\mathcal{N}}{\pi} )$ for all times. The remaining case $X(0) \ge \max(s_0, \frac{2\mathcal{N}}{\pi} )$ means that as long as  $X(t) \ge \max(s_0, \frac{2\mathcal{N}}{\pi} )$, we can use \eqref{odiP} that gives exponential damping  $\frac{d}{dt} X (t) \leq -1$. Therefore at certain $t_1< \infty$ we have $X(t_1) <  \max(s_0, \frac{2\mathcal{N}}{\pi} )$. Now we repeat the argument from the previous case.

Thus, we obtain the global bound
$$
\sup_{0\leq t<\infty}\|u(t)\|_{L^\infty}\leq C(\alpha,r,\chi,  \| u_0 \|_{L^\infty},\| u_0 \|_{L^1}).
$$
In the 'critical' case $\alpha=1$, we have for $\frac{2\mathcal{N}}{\pi} \leq \|u (t)\|_{L^\infty}$
$$
\frac{d}{dt}\bar{u}\leq \left(\chi-r-\frac{1}{2\pi \max(\| u_0\|_{L^1}, 2 \pi )}\right)\bar{u}^2+r\bar{u}.
$$
Under the assumption $\chi-r-\frac{1}{2\pi \max(\| u_0\|_{L^1}, 2 \pi)} <0$, the blowup argument implies again the thesis.
\qed

\section*{Acknowledgments} JB is partially supported by the National Science Centre (NCN) grant no. 2011/01/N/ST1/05411. RGB is partially supported by the Department of Mathematics at University of California, Davis.

\appendix
\section{Pointwise estimates for the fractional laplacian}
\begin{lem}\label{lemaaux3}
Let $h\in C^2(\TT)$ be a positive function and write $h(x^*)=\max_x h(x)=\|h\|_{L^\infty}$.
Then, if $h$ verifies the bounds
$$
\|h\|_{L^\infty}/2\geq\langle h \rangle,\,\|h\|_{L^p(\TT)}\leq \gamma_p,\text{ with any }\gamma_p\leq\frac{\sqrt[p]{\pi}\|h\|_{L^\infty}}{2},
$$
then
$$
\Lambda^\alpha h(x^*)\geq \frac{\Gamma(1+\alpha)\cos((1-\alpha)\pi/2)}{\pi}\frac{1}{2^{p\alpha}}\frac{\|h\|_{L^\infty}^{1+\alpha p}}{\gamma_p^{\alpha p}}.
$$
\end{lem}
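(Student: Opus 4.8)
The plan is to work from the singular-integral representation \eqref{Lkernelalpha} of $\Lambda^\alpha h$ evaluated at the maximum point $x^*$, exploiting that $h(x^*)-h(x^*-\eta)\geq 0$ for every $\eta$. In particular every $k\neq 0$ term in \eqref{Lkernelalpha} is nonnegative, so it may be discarded, and the principal-value integral may be restricted to any subset. I will restrict it to the sublevel set $E:=\{\eta\in\TT:\ h(x^*-\eta)\leq\tfrac12\|h\|_{L^\infty}\}$, on which the numerator is at least $\tfrac12\|h\|_{L^\infty}$, so that
$$
c_\alpha^{-1}\,\Lambda^\alpha h(x^*)\ \geq\ \frac{\|h\|_{L^\infty}}{2}\int_E\frac{d\eta}{|\eta|^{1+\alpha}}.
$$

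Next I would estimate the size of $E^c=\{h(x^*-\eta)>\tfrac12\|h\|_{L^\infty}\}$ by Chebyshev together with $\|h\|_{L^p}\leq\gamma_p$ (using translation invariance of $\|\cdot\|_{L^p}$ on $\TT$), namely $|E^c|\leq 2^p\gamma_p^p\|h\|_{L^\infty}^{-p}=:\mu$; the hypothesis $\gamma_p\leq\sqrt[p]{\pi}\,\|h\|_{L^\infty}/2$ is exactly what forces $\mu\leq\pi$, so $E$ is a genuine subset of $\TT$. Since $|\eta|^{-(1+\alpha)}$ is even and radially decreasing, $\int_E|\eta|^{-(1+\alpha)}d\eta$ can only decrease if $E$ is replaced by a set of the same or smaller measure pushed towards $|\eta|=\pi$; using $|E|\geq 2\pi-\mu$ and the outer annulus $\{\mu/2\leq|\eta|\leq\pi\}$ we obtain
$$
\int_E\frac{d\eta}{|\eta|^{1+\alpha}}\ \geq\ \frac{2}{\alpha}\Big((\mu/2)^{-\alpha}-\pi^{-\alpha}\Big).
$$
(The remaining hypothesis $\langle h\rangle\leq\tfrac12\|h\|_{L^\infty}$ is in fact a consequence of $\|h\|_{L^p}\leq\gamma_p\leq\sqrt[p]{\pi}\,\|h\|_{L^\infty}/2$ via Hölder, so it need not be invoked separately.)

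It then remains to substitute $(\mu/2)^{-\alpha}=\|h\|_{L^\infty}^{p\alpha}\,2^{-(p-1)\alpha}\gamma_p^{-p\alpha}$ into the two displays and simplify: the asserted bound is equivalent to the elementary inequality
$$
\frac{\|h\|_{L^\infty}^{p\alpha}}{\gamma_p^{p\alpha}}\,(2^\alpha-\alpha)\ \geq\ \frac{2^{p\alpha}}{\pi^\alpha},
$$
which follows from $\|h\|_{L^\infty}^{p\alpha}\gamma_p^{-p\alpha}\geq 2^{p\alpha}\pi^{-\alpha}$ (the hypothesis once more) together with $2^\alpha-\alpha\geq 1$. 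The hard part is precisely this last bookkeeping of constants — the factor $2^{p\alpha}$ in the conclusion is matched against the threshold $\sqrt[p]{\pi}\,\|h\|_{L^\infty}/2$ in the hypothesis, and the inequality $2^\alpha-\alpha\geq1$ is sharp at $\alpha=1$ (consistent with the critical-case smallness condition in Proposition \ref{th:largetime}) and fails for $\alpha<1$. To cover the range $0<\alpha<1$ one keeps the nonnegative $k\neq0$ tail of \eqref{Lkernelalpha} in the first step, which replaces the lossy $-\pi^{-\alpha}$ by a convergent zeta-type series; estimating that periodic tail is the only extra, purely computational, step.
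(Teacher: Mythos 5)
Your overall strategy is the same as the paper's: evaluate the kernel representation \eqref{Lkernelalpha} at the maximum point, discard the nonnegative periodic tail, use Chebyshev with $\|h\|_{L^p}\le\gamma_p$ to bound the measure of the set where the drop $h(x^*)-h(x^*-\eta)$ is smaller than $\|h\|_{L^\infty}/2$, and integrate the kernel over the complementary ``good'' set. (Your side remark that the hypothesis $\langle h\rangle\le\|h\|_{L^\infty}/2$ follows from the $L^p$ hypothesis via H\"older is correct; the paper's proof does not use it either.) However, there is a genuine gap in the execution. By integrating the kernel over the whole annulus $\{\mu/2\le|\eta|\le\pi\}$ you pick up the loss term $-\pi^{-\alpha}$, and your final bookkeeping then reduces to $A(2^\alpha-\alpha)\ge 2^{p\alpha}\pi^{-\alpha}$ with only $A\ge 2^{p\alpha}\pi^{-\alpha}$ available from the hypothesis, i.e.\ to $2^\alpha\ge 1+\alpha$. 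As you yourself note, this fails for all $\alpha\in(0,1)$, which is exactly the supercritical range the lemma must cover (its statement is unrestricted in $\alpha$, even if the paper happens to invoke it only with exponents $\ge 1$). Your proposed repair --- keeping the $k\neq0$ tail to cancel the $-\pi^{-\alpha}$ --- is a sketch, not a proof, and it is not ``purely computational'': (i) the bathtub/rearrangement step now requires the \emph{periodized} kernel $\sum_k|\eta+2k\pi|^{-(1+\alpha)}$ to be radially decreasing on $(0,\pi]$, which is true but needs an argument (the individual $k\neq0$ pairs are \emph{increasing} in $|\eta|$); and (ii) the tail does not simply delete $-\pi^{-\alpha}$, it replaces it by $\tfrac{2}{\alpha}\pi^{-\alpha}$ minus zeta-type hole corrections of size $O(\mu)$, and one must check quantitatively that the leftover beats the required constant uniformly in $\alpha\in(0,1)$ --- and even then it is doubtful you recover the exact constant $2^{-p\alpha}$ asserted in the statement.

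The paper avoids this entirely by being \emph{cruder} at the kernel-integration step: it restricts to the small ball $B(0,\rho)$ with $\rho=(2\gamma_p/h(x^*))^p$ (your $\mu$, essentially), notes that the good set $\mathcal{U}_1\subset B(0,\rho)$ has measure at least $2\rho-|\mathcal{U}_2|\ge\rho$ by the same Chebyshev bound, and simply uses $|\eta|^{-(1+\alpha)}\ge\rho^{-(1+\alpha)}$ on that ball. This produces $c_\alpha\,\tfrac{h(x^*)}{2}\,\rho^{-\alpha}$ with no $\pi^{-\alpha}$ subtraction and no case distinction in $\alpha$, yielding the stated constant for the full range $\alpha\in(0,2)$. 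The lesson is that your sharper evaluation of $\int_E|\eta|^{-(1+\alpha)}\,d\eta$ is counterproductive here: the gain of the factor $2^\alpha/\alpha$ over the paper's crude bound is exactly what gets eaten by the $-\pi^{-\alpha}$ loss when $\alpha<1$. To make your write-up a complete proof you should either adopt the paper's localization to $B(0,\mu/2)$, or carry out in full the periodized-tail estimate you only gesture at (monotonicity of the periodized kernel plus the series-versus-surplus comparison), accepting that the constant may then differ from the one in the statement.
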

\begin{proof} Let $\rho >0$ be a constant to be fixed later. We define 
$$
\mathcal{U}_1=\{\eta\in B(0,\rho) \; | \; h(x^*)-h(x^*-\eta)>h(x^*)/2 \},
$$
and $\mathcal{U}_2=B(0,\rho)-\mathcal{U}_1$. Notice that if the function is sharp enough, \emph{i.e.} if $h(x^*)/2 > \min_{x \in B(0,\rho)}  h(x) $, then $\mathcal{U}_1\neq\emptyset$. Starting from our assumption, we have
$$
\gamma^p_p\geq\|h\|_{L^p}^p=\int_\TT|h(x^*-\eta)|^p d\eta\geq \int_{\mathcal{U}_2}|h(x^*-\eta)|^pd\eta\geq\frac{ |h(x^*)|^p}{2^p}|\mathcal{U}_2|,
$$
so, via positivity
\begin{equation}\label{eqappaux}
-\left(\frac{2\gamma_p}{h(x^*)}\right)^p = -\left(\frac{2\gamma_p}{|h(x^*)|}\right)^p\leq -|\mathcal{U}_2|.
\end{equation}
Recalling \eqref{Lkernelalpha} together with 
$$
c(\alpha)=\frac{\Gamma(1+\alpha)\cos((1-\alpha)\pi/2)}{\pi},
$$
we have
\begin{eqnarray*}
\Lambda^\alpha h(x^*)&=&c(\alpha)\sum_{k}\text{P.V.}\int_\TT\frac{h(x^*)-h(x^*-\eta)}{|\eta+k2\pi|^{1+\alpha}}d\eta\\
&\geq& c(\alpha)\text{P.V.}\int_{\mathcal{U}_1}\frac{h(x^*)-h(x^*-\eta)}{|\eta|^{1+\alpha}}d\eta\\
&\geq& c(\alpha)\frac{h(x^*)}{2 \rho^{1+\alpha}}|\mathcal{U}_1|\\
&\geq& c(\alpha)\frac{h(x^*)}{2 \rho^{1+\alpha}}\left(2 \rho-|\mathcal{U}_2|\right)\\
&\geq& c(\alpha)\frac{h(x^*)}{2 \rho^{1+\alpha}}\left(2 \rho -\left(\frac{2\gamma_p}{h(x^*)}\right)^p\right).
\end{eqnarray*}
Let us fix now
$$
\rho=\left(\frac{2\gamma_p}{h(x^*)}\right)^p,
$$
thus
$$
\Lambda^\alpha h(x^*)\geq c(\alpha)\frac{h(x^*)\left(\frac{2\gamma_p}{h(x^*)}\right)^p}{\left(\frac{2\gamma_p}{h(x^*)}\right)^{p+p\alpha}}=\frac{c(\alpha)}{2^{p\alpha}}\frac{h(x^*)^{1+\alpha p}}{\gamma_p^{\alpha p}}.
$$
Finally notice that due to the boundedness of the domain we have to impose the restriction
$$
r=\left(\frac{2\gamma_p}{h(x^*)}\right)^p\leq \pi \text{ i.e. } \gamma_p\leq\frac{\sqrt[p]{\pi}\|h\|_{L^\infty}}{2}.
$$
\end{proof}

\section{Sobolev-type inequalities for the fractional laplacian}
We have the following Sobolev-type bounds for the entropy
\begin{lem}\label{lemaentropy}
Let $0\leq u\in L^p(\TT^d)$ be a given function and $0<\alpha<2$, $0<\delta<\alpha/2$ two fixed constants. Then
\begin{itemize}
\item for $p=1$
$$
\|u\|_{\dot{W}^{\alpha/2-\delta,1}}^2\leq C(\alpha,d,\delta)\|u\|_{L^1}\int_{\TT^d}\Lambda^\alpha u(x)\log(u(x))dx,
$$
\item for $p=\infty$
$$
\|u\|_{\dot{H}^{\alpha/2}}^2\leq C(\alpha,d)\|u\|_{L^\infty}\int_{\TT^d}\Lambda^\alpha u(x)\log(u(x))dx.
$$
\end{itemize}
\end{lem}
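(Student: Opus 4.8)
The plan is to reduce both bounds to the elementary fact that for positive reals $a,b$ the numbers $a-b$ and $\log a-\log b$ have the same sign and $\min(a,b)\le(a-b)/(\log a-\log b)\le\max(a,b)$ (mean value theorem applied to $\log$). First I would record the quadratic-form representation of the dissipation: symmetrizing the kernel in \eqref{Lkernelalpha},
\[
\int_{\TT^d}\Lambda^\alpha u(x)\log u(x)\,dx=\frac{c_\alpha}{2}\iint_{\TT^d\times\TT^d}K(x-y)\,(u(x)-u(y))(\log u(x)-\log u(y))\,dx\,dy,
\]
where $K(z)=\sum_{k\in\ZZ^d}|z+2\pi k|^{-(d+\alpha)}$ is the periodic kernel of $\Lambda^\alpha$. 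Since $K>0$ and the integrand is pointwise $\ge0$, keeping only the $k=0$ summand of $K$ gives the lower bound
\[
\int_{\TT^d}\Lambda^\alpha u\log u\,dx\ \ge\ \frac{c_\alpha}{2}\iint_{\TT^d\times\TT^d}\frac{(u(x)-u(y))(\log u(x)-\log u(y))}{|x-y|^{d+\alpha}}\,dx\,dy\ \ge\ 0,
\]
and it is this lower bound that the seminorms will be compared against. Note also that $\|u\|_{\dot{H}^{\alpha/2}}^2=\|\Lambda^{\alpha/2}u\|_{L^2}^2=\frac{c_\alpha}{2}\iint_{\TT^d\times\TT^d}K(x-y)(u(x)-u(y))^2\,dx\,dy$ by the same symmetrization.

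For $p=\infty$ the argument is immediate. The elementary inequality gives the pointwise bound $(u(x)-u(y))^2\le\|u\|_{L^\infty}(u(x)-u(y))(\log u(x)-\log u(y))$; multiplying by $\frac{c_\alpha}{2}K(x-y)$ and integrating over $\TT^d\times\TT^d$ yields $\|u\|_{\dot{H}^{\alpha/2}}^2\le\|u\|_{L^\infty}\int_{\TT^d}\Lambda^\alpha u\log u\,dx$, which is the claim (in fact with constant $1$).

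For $p=1$ put $s=\alpha/2-\delta$ (so $0<s<1$, by the hypotheses $0<\delta<\alpha/2$ and $\alpha<2$), and split the Gagliardo integrand as a product of $A(x,y):=|x-y|^{-(d+\alpha)/2}\big(|u(x)-u(y)|\,|\log u(x)-\log u(y)|\big)^{1/2}$ and $B(x,y):=|x-y|^{(d+\alpha)/2-(d+s)}\big(|u(x)-u(y)|/|\log u(x)-\log u(y)|\big)^{1/2}$; here the exponent $(d+\alpha)/2-(d+s)$ equals $\delta-d/2$, and the elementary inequality gives $B(x,y)^2\le|x-y|^{-(d-2\delta)}(u(x)+u(y))$. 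By Cauchy--Schwarz on $\TT^d\times\TT^d$,
\[
\Big(\iint_{\TT^d\times\TT^d}\frac{|u(x)-u(y)|}{|x-y|^{d+s}}\,dx\,dy\Big)^2\le\Big(\iint A^2\Big)\Big(\iint B^2\Big)\le\frac{2}{c_\alpha}\Big(\int_{\TT^d}\Lambda^\alpha u\log u\Big)\iint_{\TT^d\times\TT^d}\frac{u(x)+u(y)}{|x-y|^{d-2\delta}}\,dx\,dy,
\]
where $\iint A^2\le\frac{2}{c_\alpha}\int_{\TT^d}\Lambda^\alpha u\log u$ by the first paragraph (again using that the two factors share the same sign). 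The final integral equals $2\int_{\TT^d}u(x)\big(\int_{\TT^d}|x-y|^{-(d-2\delta)}\,dy\big)\,dx\le C(d,\delta)\|u\|_{L^1}$, since $d-2\delta<d$ makes the inner integral finite, uniformly in $x$ over the bounded domain. Combining these yields $\|u\|_{\dot{W}^{\alpha/2-\delta,1}}^2\le C(\alpha,d,\delta)\|u\|_{L^1}\int_{\TT^d}\Lambda^\alpha u\log u$, as desired.

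There is no deep obstacle here; the one point to watch is the exponent bookkeeping in the $p=1$ split, which must leave the residual kernel $|x-y|^{-(d-2\delta)}$ locally integrable --- this is precisely why a loss $\delta>0$ is forced, i.e.\ why one obtains the exponent $\alpha/2-\delta$ rather than $\alpha/2$ --- together with keeping the periodic-kernel comparisons oriented in the right direction so that dropping the $k\neq0$ terms is a legitimate lower bound. The elementary inequality relating $a-b$ and $\log a-\log b$ is classical.
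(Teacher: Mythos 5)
Your proof is correct; the paper itself does not prove this lemma but only cites \cite{BGL} (for $p=1$) and \cite{bae2015global} (for $p=\infty$), and your argument is precisely the logarithmic analogue of the paper's own appendix proof of Lemma~\ref{lemaentropy2}: symmetrize the dissipation into a nonnegative bilinear form, drop the $k\neq0$ periodic images to get a lower bound, and (for $p=1$) split the Gagliardo integrand by Cauchy--Schwarz, with the mean-value bound $|u(x)-u(y)|/|\log u(x)-\log u(y)|\le\max(u(x),u(y))$ playing the role of the paper's bound on $(u^s(x)-u^s(\eta))(u(x)-u(\eta))$ there. The exponent bookkeeping checks out (the residual kernel $|x-y|^{2\delta-d}$ is integrable on $\TT^d$, which is exactly where the loss $\delta>0$ is needed), and the $p=\infty$ case is the immediate pointwise comparison, even yielding constant $1$.
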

For the proof of the case $p=1$, the interested reader can see \cite{BGL} by the authors \& Luli. The case $p=\infty$ was proved by Bae \& Granero-Belinch\'on in \cite{bae2015global}.

In the next lemma we obtain fractional Sobolev bounds in terms of $\int u^s\Lambda^\alpha$:
\begin{lem}\label{lemaentropy2}
Let $0\leq u\in L^{1+s}(\TT)$, $s\le 1$, be a given function and $0<\alpha<2$, $0<\delta<\alpha/(2+2s)$ two fixed constants. Then, 
$$
0 \le \int_{\TT}\Lambda^\alpha u(x) u^s(x)dx.
$$
Moreover,
$$
\|u\|_{\dot{W}^{\alpha/(2+2s)-\delta,1+s}}^{2+2s}\leq C(\alpha,s,\delta)\|u\|_{L^{1+s}}^{1+s}\int_{\TT}\Lambda^\alpha u(x) u^s(x)dx.
$$
\end{lem}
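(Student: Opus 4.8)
The plan is to establish the two assertions separately: the nonnegativity of $\int_{\TT} u^s\Lambda^\alpha u$ by symmetrisation, and the Sobolev bound by an elementary pointwise inequality for $a\mapsto a^s$ followed by Cauchy--Schwarz. Write $\gamma=\frac{\alpha}{2+2s}-\delta$ and recall from \eqref{Lkernelalpha} that $\Lambda^\alpha$ has the periodic kernel $\tilde K(z)=\mathrm{P.V.}\,|z|^{-1-\alpha}+\sum_{k\neq0}|z+2k\pi|^{-1-\alpha}\ge|z|^{-1-\alpha}>0$, with weight $c_\alpha=\frac{\Gamma(1+\alpha)}{\pi}\sin(\tfrac{\alpha\pi}{2})>0$ for $\alpha\in(0,2)$. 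Multiplying \eqref{Lkernelalpha} by $u^s(x)$, integrating in $x$ and symmetrising in $x\leftrightarrow y$ (the double integral converges absolutely near the diagonal for $\alpha<2$, the integrand there being $O(|x-y|^{1-\alpha})$) gives
\[
\int_{\TT}u^s\Lambda^\alpha u\,dx=\frac{c_\alpha}{2}\int_{\TT}\int_{\TT}\big(u^s(x)-u^s(y)\big)\big(u(x)-u(y)\big)\,\tilde K(x-y)\,dx\,dy\ \ge\ 0,
\]
since $a\mapsto a^s$ is nondecreasing and $\tilde K\ge0$; discarding the nonnegative tail ($k\neq0$) also yields $\int_{\TT}\int_{\TT}\frac{(u^s(x)-u^s(y))(u(x)-u(y))}{|x-y|^{1+\alpha}}\le\frac{2}{c_\alpha}\,\mathcal D$, with $\mathcal D:=\int_{\TT}u^s\Lambda^\alpha u\,dx$.

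For $0<s\le1$ and $a\ge b\ge0$ one has $a^s-b^s=s\int_b^a\tau^{s-1}d\tau\ge s(a-b)a^{s-1}$, hence for all $a,b\ge0$
\[
|a-b|^{1+s}\ \le\ \frac1s\,(a^s-b^s)(a-b)\,\frac{\max(a,b)^{1-s}}{|a-b|^{1-s}}.
\]
Moreover, setting $t=\min(a,b)/\max(a,b)\in[0,1]$, a short computation gives
\[
\frac{(a^s-b^s)(a-b)}{|a-b|^{2-2s}}\,\max(a,b)^{2-2s}=\max(a,b)^{1+s}\,\frac{1-t^s}{(1-t)^{1-2s}}\ \le\ C(s)\,\max(a,b)^{1+s},
\]
because $t\mapsto(1-t^s)/(1-t)^{1-2s}$ equals $1$ at $t=0$, is continuous on $[0,1)$, and tends to $0$ as $t\to1^-$ (there $1-t^s\sim s(1-t)$), hence is bounded on $[0,1]$.

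Now set $J:=\int_{\TT}\int_{\TT}\frac{|u(x)-u(y)|^{1+s}}{|x-y|^{1+\gamma(1+s)}}\,dx\,dy$, so that $\|u\|_{\dot W^{\gamma,1+s}}^{2+2s}=J^2$. Applying the first pointwise inequality with $a=u(x)$, $b=u(y)$ and splitting $|x-y|^{-(1+\gamma(1+s))}=|x-y|^{-(1+\alpha)/2}\,|x-y|^{(1+\alpha)/2-(1+\gamma(1+s))}$, Cauchy--Schwarz yields
\[
J\ \le\ \frac1s\Big(\int_{\TT}\int_{\TT}\frac{(u(x)-u(y))(u^s(x)-u^s(y))}{|x-y|^{1+\alpha}}\Big)^{1/2}\Big(\int_{\TT}\int_{\TT}\frac{(u(x)-u(y))(u^s(x)-u^s(y))}{|u(x)-u(y)|^{2-2s}}\,\frac{\max(u(x),u(y))^{2-2s}}{|x-y|^{\kappa}}\Big)^{1/2},
\]
with $\kappa=2\big(1+\gamma(1+s)\big)-(1+\alpha)=1-2(1+s)\delta<1$. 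The first factor is $\le(2\mathcal D/c_\alpha)^{1/2}$ by the first paragraph; in the second factor, the second pointwise inequality bounds the integrand by $C(s)\max(u(x),u(y))^{1+s}|x-y|^{-\kappa}\le C(s)\big(u(x)^{1+s}+u(y)^{1+s}\big)|x-y|^{-\kappa}$, and since $\kappa<1$ we have $\sup_{x\in\TT}\int_{\TT}|x-y|^{-\kappa}dy<\infty$, so the second factor is $\le C(\alpha,s,\delta)\,\|u\|_{L^{1+s}}^{(1+s)/2}$. Squaring gives $\|u\|_{\dot W^{\gamma,1+s}}^{2+2s}=J^2\le C(\alpha,s,\delta)\,\|u\|_{L^{1+s}}^{1+s}\,\mathcal D$, as claimed.

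The routine ingredients are the symmetrisation and the algebra of the weight split; the essential points are the two elementary inequalities for $a\mapsto a^s$ — in particular noticing that the degenerate factor $|u(x)-u(y)|^{-(2-2s)}$ produced by the first one is precisely what makes the ``bad'' kernel in the second Cauchy--Schwarz factor pointwise dominated by $\max(u(x),u(y))^{1+s}$ — and arranging the exponent bookkeeping so that after squaring one recovers exactly the balanced product $\|u\|_{L^{1+s}}^{1+s}\,\mathcal D$. The hypothesis $0<\delta<\alpha/(2+2s)$ enters exactly here: $\delta>0$ gives the borderline integrability $\kappa<1$, while $\delta<\alpha/(2+2s)$ gives $\gamma>0$, i.e. a genuine positive-order seminorm on the left.
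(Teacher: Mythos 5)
Your argument is correct, and its architecture is the same as the paper's: symmetrise the kernel to get both the sign of $\mathcal D=\int_\TT u^s\Lambda^\alpha u\,dx$ and the lower bound $\mathcal D\gtrsim\int\int(u^s(x)-u^s(y))(u(x)-u(y))|x-y|^{-1-\alpha}$, then split the Gagliardo seminorm $J$ by Cauchy--Schwarz into a factor controlled by $\mathcal D$ (carrying the full weight $|x-y|^{-(1+\alpha)}$) and a factor controlled by $\|u\|_{L^{1+s}}^{1+s}$ (carrying the integrable leftover weight $|x-y|^{-\kappa}$ with $\kappa=1-2(1+s)\delta<1$). The only substantive difference is how the difference $u^s(x)-u^s(y)$ is handled: the paper writes $u^s(x)-u^s(y)=s\int_0^1(\lambda u(x)+(1-\lambda)u(y))^{s-1}\,d\lambda\,(u(x)-u(y))$ and carries the extra $\lambda$-variable through the Cauchy--Schwarz, distributing the singular factor $(\lambda u(x)+(1-\lambda)u(y))^{-(1-s)/2}$ between the two halves; you instead use the two elementary two-point inequalities $(a^s-b^s)(a-b)\ge s(a-b)^2\max(a,b)^{s-1}$ and $(a^s-b^s)(a-b)\max(a,b)^{2-2s}\le C(s)|a-b|^{2-2s}\max(a,b)^{1+s}$, the latter reducing to the boundedness of $t\mapsto(1-t^s)(1-t)^{2s-1}$ on $[0,1]$, which you verify. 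Your version avoids the auxiliary integration variable at the cost of this one calculus check (and of interpreting the $0/0$ expression as $0$ on the set $\{u(x)=u(y)\}$, where the integrand of $J$ vanishes anyway); the exponent bookkeeping and the final squaring $J^2=\|u\|_{\dot W^{\gamma,1+s}}^{2+2s}$ match the paper exactly.
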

\begin{proof}
Let us define
$$
I_1=\int_\T u^s(x)  \Lambda^\alpha u(x) dx.
$$
Using \eqref{Lkernelalpha} and changing variables, we compute
\begin{align*}
I_1&=c_\alpha\int_\T\sum_{k\in \ZZ, k\neq 0}\int_{\TT} u^s(x) \frac{u(x)-u(\eta)}{|x-\eta+2k\pi|^{1+\alpha}} d\eta dx\\
&\quad +c_\alpha\int_\T\text{P.V.}\int_{\TT} u^s(x)\frac{u(x)-u(\eta)}{|x-\eta|^{1+\alpha}} d\eta dx\\
&=c_\alpha\int_\T\sum_{k\in \ZZ, k\neq 0}\int_{\TT} u^s(\eta) \frac{u(\eta)-u(x)}{|\eta-x+2k\pi|^{1+\alpha}} d\eta dx\\
&\quad +c_\alpha\int_\T\text{P.V.}\int_{\TT} u^s(\eta)\frac{u(\eta)-u(x)}{|x-\eta|^{1+\alpha}}d\eta dx.
\end{align*}
Hence
\begin{align*}
I_1&=\frac{c_\alpha}{2}\int_\T\sum_{k\in \ZZ, k\neq 0}\int_{\TT} (u^s(x)-u^s(\eta)) \frac{u(x)-u(\eta)}{|x-\eta+2k\pi|^{1+\alpha}} d\eta dx\\
&\quad +\frac{c_\alpha}{2}\int_\T\text{P.V.}\int_{\TT} (u^s(x)-u^s(\eta))\frac{u(x)-u(\eta)}{|x-\eta|^{1+\alpha}} d\eta dx\\
&\geq0.
\end{align*}
In particular, using $u \ge 0$
\begin{align}\label{L7I1}
I_1&\geq\frac{c_\alpha}{2}\int_\T\int_{\TT} (u^s(x)-u^s(\eta))\frac{u(x)-u(\eta)}{|x-\eta|^{1+\alpha}} d\eta dx\nonumber\\
&=  \frac{c_\alpha}{2}\int_\T\int_{\TT}\int_0^1 \frac{d}{d \lambda}  \left( (\lambda u(x)+(1-\lambda)u(\eta))^s \right)     \frac{u(x)-u(\eta)}{|x-\eta|^{1+\alpha}} d\eta dx \nonumber \\
& =   \frac{c_\alpha}{2}\int_\T\int_{\TT}\int_0^{1} \frac{s}{(\lambda u(x)+(1-\lambda)u(\eta))^{1-s}}\frac{(u(x)-u(\eta))^2}{|x-\eta|^{1+\alpha}} d \lambda d\eta dx.
\end{align}
Let us define $\beta$ and $I$
$$
\beta=\frac{\alpha}{2+2s}-\delta,
$$
$$
I=\|u\|_{\dot{W}^{\beta,1+s}}^{1+s}=\int_\TT\int_\TT\frac{|u(x)-u(\eta)|^{1+s}}{|x-\eta|^{1+(1+s)\beta}}dxd\eta.
$$
Then we compute
\begin{align*}
I&=\int_\TT\int_\TT\int^1_0\frac{|u(x)-u(\eta)|^{1+s}}{|x-\eta|^{1+(1+s)\beta}}d\lambda dxd\eta\\
&=\int_\TT\int_\TT\int^1_0\frac{|u(x)-u(\eta)|}{|x-\eta|^{0.5+(1+s)\beta-\alpha/2}}\frac{|u(x)-u(\eta)|^s}{|x-\eta|^{0.5+\alpha/2}} \frac{|\lambda u(x)+(1-\lambda)u(\eta)|^{(1-s)/2}}{|\lambda u(x)+(1-\lambda)u(\eta)|^{(1-s)/2}} d\lambda dxd\eta\\
&=\int_\TT\int_\TT\int^1_0 F(x,\eta,\lambda)G(x,\eta,\lambda)d\lambda dx d\eta,
\end{align*}
where
$$
F=\frac{|u(x)-u(\eta)|}{|x-\eta|^{0.5+\alpha/2}}\frac{1}{|\lambda u(x)+(1-\lambda)u(\eta)|^{(1-s)/2}},
$$
$$
G=\frac{|u(x)-u(\eta)|^s}{|x-\eta|^{0.5+(1+s)\beta-\alpha/2}}|\lambda u(x)+(1-\lambda)u(\eta)|^{(1-s)/2}.
$$
Consequently 
\begin{equation}\label{L7I}
I\leq \|F\|_{L^2(\TT\times\TT\times[0,1])}\|G\|_{L^2(\TT\times\TT\times[0,1])}.
\end{equation}
We have via \eqref{L7I1}
\[
\|F\|_{L^2(\TT\times\TT\times[0,1])}^2=\int_\TT\int_\TT \int_0^1\frac{(u(x)-u(\eta))^2}{|x-\eta|^{1+\alpha}}\frac{d\lambda dxd\eta}{(\lambda u(x)+(1-\lambda)u(\eta))^{1-s}} \leq \frac{2}{c_\alpha s} I_1.
\]
Since
\begin{align*}
G^2&=\frac{|u(x)-u(\eta)|^{2s}}{|x-\eta|^{1+2(1+s)\beta-\alpha}}|\lambda u(x)+(1-\lambda)u(\eta)|^{1-s}\\
&\leq C(s)\frac{|u(x)|^{1+s}+|u(\eta)|^{1+s}}{|x-\eta|^{1+2(1+s)\beta-\alpha}} =  C(s)\frac{|u(x)|^{1+s}+|u(\eta)|^{1+s}}{|x-\eta|^{1- 2(1+s) \delta}},
\end{align*}
thus
$$
\|G\|_{L^2(\TT\times\TT\times[0,1])}^2\leq C(s,\delta)\|u\|_{L^{1+s}}^{1+s}.
$$
Estimates for $F$ and $G$ in \eqref{L7I} give thesis.
\end{proof}

\end{document}